\def\be{\begin{equation}}
\def\ee{\end{equation}}
\def\bea{\begin{eqnarray}}
\def\eea{\end{eqnarray}}
\def\nn{\nonumber}
\def\Ham{\operatorname{Hamming}}
\def\R{\mathbb{R}}
\def\s{\sigma}
\def\a{\alpha}
\def\e{\varepsilon}
\def\b{\beta}
\def\t{\tau}
\def\sign{\operatorname{sign}}
\def\card{\operatorname{Card}}
\newcommand{\ie}{\textit{i.e. }}
\newcommand{\OO}[1]{O \left(\frac{1}{#1}\right)}
\newcommand{\wh}{\widehat}
\DeclareMathSymbol{\leqslant}{\mathalpha}{AMSa}{"36} 
\DeclareMathSymbol{\geqslant}{\mathalpha}{AMSa}{"3E} 
\DeclareMathSymbol{\eset}{\mathalpha}{AMSb}{"3F}     
\renewcommand{\leq}{\;\leqslant\;}                   
\renewcommand{\geq}{\;\geqslant\;}                   
\DeclareMathOperator{\Barr}{Bar}
\DeclareMathOperator{\Var}{Var}
\def\ie{\textit{i.e. }}
\def\a{\alpha}
\def\e{\varepsilon}
\def\b{\beta}
\def\l{\lambda}
\def\s{\sigma}
\def\t{\tau}
\def\R{\mathbb{R}}
\theoremstyle{plain}
\newtheorem{theorem}{Theorem}[section]
\newtheorem{lemma}{Lemma}[section]
\newtheorem{proposition}{Proposition}[section]
\theoremstyle{definition}
\newtheorem{remark}{Remark}[section]
\numberwithin{equation}{section}
\definecolor{light}{gray}{.9}
\title[Retrieval in RBMs]
{Pattern reconstruction with restricted Boltzmann machines}
\author{Giuseppe Genovese}
\address{Institute of Mathematics, University of Zurich, Winterthurerstrasse 190, 8057 Zurich, Switzerland.}
\email{giuseppe.genovese@math.uzh.ch}
\date{\today}                     
\begin{document}
\maketitle

\begin{abstract}
Restricted Boltzmann machines are energy models made of a visible and a hidden layer. We identify an effective energy function describing the zero-temperature landscape on the visible units and depending only on the tail behaviour of the hidden layer prior distribution. Studying the location of the local minima of such an energy function, we show that the ability of a restricted Boltzmann machine to reconstruct a random pattern depends indeed only on the tail of the hidden prior distribution. We find that hidden priors with strictly super-Gaussian tails give only a logarithmic loss in pattern retrieval, while an efficient retrieval is much harder with hidden units with strictly sub-Gaussian tails; if the hidden prior has Gaussian tails, the retrieval capability is determined by the number of hidden units (as in the Hopfield model). 
\end{abstract}

\section{Introduction}\label{sect:intro}

Restricted Boltzmann machines (RBMs) are represented by probability distributions on the product space $\{-1,1\}^{N_1}\times \R^{N_2}$ whose density w.r.t. the uniform probability on $\{-1,1\}^{N_1}$ times some prior distribution on $\R^{N_2}$ depends on a matrix valued parameter $W$ (so-called weight matrix) and it is proportional to
\be\label{eq:RBMp}
\exp\left(\sum_{i\in[N_2]}\sum_{\mu\in[N_2]}\s_iW_i^{\mu} z_\mu\right)\,.
\ee
The vectors $(\s_1,\ldots,\s_{N_1})$ and $(z_1,\ldots,z_{N_2})$ are called respectively visible and hidden layer and their entries visible and hidden units. Typically the units are i.i.d.

RBMs are widely studied generative models of machine learning, introduced long ago in \cite{smo}. Their mathematical relation with models of associative memory, such as the ones proposed by Little \cite{little} or Hopfield \cite{hop}, was noted at the very early stage of the theory, see \cite{hinton}. Indeed exploiting the product structure of the RBM distribution integrating out the hidden layer, one can analyse the corresponding model of associative memory, see \cite{tubiana}, \cite{PRE}, \cite{bulso}; the simplest example is the Hopfield model, obtained by a RBM with Gaussian hidden prior by a Hubbard-Stratonovic transform. The present work aligns with this line of research.

We will be interested in particular to the possibility of reconstructing a pattern just looking at the typical configurations of the visible layer of a RBM. This operation is also called pattern retrieval.  
The question is relevant for the understanding of how the hidden layer affects the configurations of the visible units in the RBM distribution. In the last years the main focus on RBMs has been on learning the unknown probability distribution underlying a given dataset \cite{bresler19, goel20, back, nature}. For the practitioner, learning a RBM amounts to fitting the true law of the data by tuning the weights in the density (\ref{eq:RBMp}) and this is typically done by gradient ascent on the Kullback-Leibler divergence. After the learning process, deep local minima of the energy function are supposed to fall close to the datapoints. Therefore the analysis of learning in RBMs consists of two tasks: understanding the complex landscape of the energy in the vicinity of the datapoints at given weights and devising good optimisation algorithms to fit the data. The investigation of each of these steps is a true mathematical challenge. In these respects pattern retrieval represents a simplified setting to study at first instance, as the roles of the datapoints and of the weights is undertaken by the same objects, the patterns. So there is no optimisation, but one only has to look at the energy landscape in the vicinity of the patterns. 

More precisely, we study the retrieval of i.i.d. binary patterns as the distribution of the hidden layer varies. We do it looking at the local minima of the energy function, in what is called in statistical physics a zero temperature limit, in which retrieval is maximised (see e.g. \cite{amit}).
We prove that the tail of the hidden prior distribution determines the retrieval capability of RBMs. More precisely, for priors with tails ranging from exponential to Gaussian we prove that deep local minima are well localised about the patterns, while if the tails of the hidden prior decay faster than Gaussian, we show that the patterns cannot be retrieved well in any case. RBMs whose hidden priors have Gaussian tails (a class including the Hopfield model) represent special threshold cases which we treat separately in either the positive (Theorem \ref{TH1}) and the negative (Theorem \ref{TH2}) result below. 

\subsection{Setting}

We consider RBMs with i.i.d. Bernoulli $\pm1$ visible units $\s_1,\ldots,\s_{N_1}$ and symmetric i.i.d. hidden units $z_1,\ldots, z_{N_2}$ distributed according to some prior $\pi$. We allow a certain freedom in the choice of the hidden prior $\pi$, for which we only require that
\be\label{eq:priorZ}
\pi(|z|\geq t)\simeq e^{-|t|^{q}}\qquad\mbox{for some $q>1$}\,. 
\ee

Let $\xi^{(1)},\ldots, \xi^{(N_2)}$ denote independent random vectors, that we call patterns, with $N_1$ centred i.i.d. $\pm1$ components. We consider RBM probability distributions with (unnormalised) density (w.r.t. the priors)
\be\label{eq:p}
p(\s,z;\xi):= \exp\left(\frac{\b}{N_1^{\frac 1{q_-}}}\sum_{\mu\in[N_2]}(\s,\xi^{(\mu)}) z_\mu\right)\,,
\ee
where $\b>0$ is a parameter usually called {\em inverse temperature} and $q_-:=\min(q,2)$. We shall consider the ratio between the number of visible and hidden units as follows
\be\label{eq:alpha}
\a:=\frac{N_2}{N_1^\frac{p_+}{2}}\,,
\ee 
where $\frac1{p_+}+\frac1{q_-}=1$. This is a parameter which will be considered as a constant in the subsequent analysis.  
The normalisation factors in (\ref{eq:p}) and (\ref{eq:alpha}) are unusual. For instance in \eqref{eq:p} typically from a spin glass perspective one adopts a more familiar normalisation with $\sqrt N_1$, while for learning one leaves the energy unnormalised (as the best normalisation is learned with the weights). Our choice ensures that either the energy of the single pattern and the global maximum of (\ref{eq:p}) as $\b\to\infty$ stay bounded as $N_1$ grows to infinity and scale linearly with $\a$ (with constants depending on $q$). The aim of Section \ref{sect:T0} is to make this point more precise. 
 
Integrating out the hidden layer in (\ref{eq:p}) we get a probability distribution over the visible units. Its density writes as
\be\label{eq:ditrsRHS}
\int p(\s,z;\xi) \pi(dz_1)\ldots \pi(dz_{N_2})=\exp\left(\sum_{\mu\in[N_2]}u\left(\frac\b{N_1^{\frac 1q}}(\xi^{\mu},\s)\right)\right)\,,
\ee
where 
$$
u(x):=\log E[e^{xz_1}]\,.
$$
We are interested in studying the local maxima of the r.h.s. of \eqref{eq:ditrsRHS} as $\b$ is very large, but $N_1,N_2$ finite. The main issue is that the dependency on $\b$ in the exponent is not multiplicative and it is not clear which function should be analysed in the limit $\b\to\infty$ (compare it for instance with the easier cases of the Sherrington-Kirkpatrick model \cite{SK} or the Hopfield model \cite{newman}, where $\b$ is just a multiplicative parameter). 

Exploiting a reduction argument introduced in \cite{PRE}, we show how to single out an effective energy function which captures the RBM landscape at zero temperature. The following simple observation starts our considerations: for any $z_1$ such that \eqref{eq:priorZ} holds for some $q>1$, it is
\be\label{eq:start}
c(q) \|z_1\|_{\psi_q}^p\leq \lim_{x\to\infty} \frac{u(x)}{|x|^{p}}\leq C(q) \|z_1\|_{\psi_q}^p\,
\ee
where $0<c(q)\leq C(q)<\infty$ are universal constants depending only on $q$ and $p$ is the H\"older conjugate exponent of $q$. For a definition of the Orlicz norms $\|\cdot\|_{\psi_q}$, see (\ref{eq:Orl}) below.
The proof of \eqref{eq:start} is immediate: we write
\bea
E[e^{xz_1}]&=&\int_0^\infty d\l P(z_1\geq x^{-1}\log\l)\simeq\int_0^\infty d\l e^{-\frac{|\log\l|^q}{\|z_1\|^q_{\psi_q}|x|^q}}\nn\\
&=&\|z_1\|_{\psi_q}|x|\int_0^\infty d\l e^{\l \|z_1\|_{\psi_q}|x|-|\l|^q}\nn\\
&=&\|z_1\|_{\psi_q}|x|e^{\frac{\|z_1\|_{\psi_q}^p|x|^p}{p}}\int_0^\infty d\l e^{\l\|z_1\|_{\psi_q} x-|\l|^q-\frac{\|z_1\|_{\psi_q}^p|x|^p}{p}}\nn\,,
\eea
and the last integral is finite uniformly in $x$ by the Young inequality. Taking the $\log$ on both sides and passing to the limit we get (\ref{eq:start}).

Thus by \eqref{eq:start}, as $\b\to\infty$ we are led to consider the following $p$-spin energy function \cite{gardner,bov-p}:
\be\label{eq:Hp}
H^{(p)}(\s;\xi):=-\frac{1}{N_1^{\kappa(p)}}\sum_{\mu\in[N_2]}|(\xi^{(\mu)},\s)|^p\,,\qquad \kappa(p):=1+p-\frac{p}{p_+}\,
\ee
(here we include the usual normalisation factor $1/N_1$ of the internal energy directly in the definition of $H^{(p)}$). 
To fix the ideas, $\kappa(p)=p$ for $p\geq 2$ and $\kappa(p)=1+\frac{p}{2}$ for $p\leq2$.

\subsection{Main results}

The focus of this paper is to study the location of the minima of \eqref{eq:Hp} on $\{-1,1\}^{N_1}$ close to the pattern configurations, in the limit $N_1,N_2\to\infty$ while $\a$ remains constant. Hence the main object of interest will be the following two sets.

\begin{align}
\mathtt{LM}_{N_1}&:=&\{\mbox{local minima of \eqref{eq:Hp}}\}\label{eq:LMset-1}\,,\\
\mathtt{dLM}_{N_1}^{(\mu)}&:=&\{\mbox{local minima $\bar\s$ of (\ref{eq:Hp}), s.t. $H^{(p)}(\xi^{(\mu)};\xi)-H^{(p)}(\bar\s;\xi)>0$}\}\label{eq:DLMset-1}\,.
\end{align}

Below $\Ham(a,b)$ denotes the Hamming distance between $a,b$ (\ie the number of different entries) and $\widehat B^{N_1}_{\mu,R}$ is the ball in this metric centred at the $\mu$-th pattern with radius $R$. Throughout the paper we will repeatedly use that two patterns are typically separated by $N_1/2$ flips, so that $\widehat B^{N_1}_{\mu,\lfloor N_1/2\rfloor}$ and $\widehat B^{N_1}_{\mu',\lfloor N_1/2\rfloor}$ typically do not overlap. We say that the event $A$ occurs with high probability (w.h.p.) if for all $x>0$, for all sufficiently large $N_1$ it holds $P(A)\geq 1-N_1^{-x}$. $S\,:\,[0,1]\mapsto\R$ denotes the coin tossing entropy
\be\label{eq:defS}
S(r):=-r\log r-(1-r)\log(1-r)\,.
\ee

Our first result states that the error of reconstructing a given pattern is very small in terms of the number of visible units $N_1$ if the decay of the hidden prior (\ref{eq:priorZ}) is slower than Gaussian, while a finite fraction of bits cannot be retrieved for $q=2$.

\begin{theorem}\label{TH1}
Let $q\in(1,2)$. There exists $r_0\in(0,\frac12]$ such that w.h.p.
\be\label{eq:TH1-p>2}
\max_{\mu\in[N_2]}\max_{\s\in \mathtt{dLM}_{N_1}^{(\mu)}\cap \widehat B^{N_1}_{\mu,\lfloor r_0N_1\rfloor}}\Ham\left(\s, \xi^{(\mu)}\right)\leq (\log N_1)^{\frac{q-1}{2-q}}\,.
\ee
Let $q=2$. For any $r\in(0,3/8)$ if $\a< \min\left(\frac13\sqrt \frac{r}{1-r},\frac{\sqrt r}{25S(r)}\right)$ then w.h.p.
\be\label{eq:TH1-p=2}
\max_{\mu\in[N_2]}\max_{\s\in \mathtt{dLM}_{N_1}^{(\mu)}\cap \widehat B^{N_1}_{\mu,\lfloor 3N_1/8\rfloor}}\Ham\left(\s, \xi^{(\mu)}\right)\leq  rN_1\,.
\ee
\end{theorem}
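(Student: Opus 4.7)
The plan is to bound, for each $k\in\{1,\dots,\lfloor r_0 N_1\rfloor\}$ and each $\mu\in[N_2]$, the probability that some $\s$ with $\Ham(\s,\xi^{(\mu)})=k$ belongs to $\mathtt{dLM}_{N_1}^{(\mu)}$, and union-bound over $(\mu,\s)$. Given the wrong-bit set $W=\{i\colon \s_i\neq\xi_i^{(\mu)}\}$ with $|W|=k$, introduce $\chi_i^\nu:=\xi_i^{(\mu)}\xi_i^{(\nu)}$ (i.i.d.\ $\pm1$ for $\nu\neq\mu$) and split
\[
A_\nu:=\sum_{i\notin W}\chi_i^\nu,\qquad B_\nu:=\sum_{i\in W}\chi_i^\nu\,,
\]
so that $(\xi^{(\mu)},\xi^{(\nu)})=A_\nu+B_\nu$, $(\s,\xi^{(\nu)})=A_\nu-B_\nu$, with $A_\nu\perp B_\nu$. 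The deep condition $H^{(p)}(\s)<H^{(p)}(\xi^{(\mu)})$ then reads
\[
\sum_{\nu\neq\mu}\bigl(|A_\nu-B_\nu|^p-|A_\nu+B_\nu|^p\bigr)\;>\;N_1^p-(N_1-2k)^p\;\asymp\; k N_1^{p-1}\,.
\]

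The left-hand side has zero mean (it is odd in $B_\nu$, whose law is symmetric). A Taylor expansion $|A-B|^p-|A+B|^p\approx -2p\, B\, A|A|^{p-2}$ valid in the bulk regime $|B|\ll|A|$ shows that the summands are sub-Weibull of index $2/p$ with scale $\sqrt k\,N_1^{(p-1)/2}$, and yields variance of order $\alpha k N_1^{(3p-2)/2}$. A Bernstein-type tail bound then gives, for a fixed $\s$, the per-configuration estimate $P(\s\in\mathtt{dLM}^{(\mu)}_{N_1})\leq \exp(-c k N_1^{(p-2)/2}/\alpha)$ in the Gaussian regime, plus a sub-Weibull correction at extreme deviations. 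Union-bounding over the $\binom{N_1}{k}\leq\exp(k\log(eN_1/k))$ possible wrong-bit sets and the $N_2=\alpha N_1^{p/2}$ patterns, the probability of having some deep local minimum at Hamming distance $k$ is at most $N_2\binom{N_1}{k}\exp(-c k N_1^{(p-2)/2}/\alpha)$. For $q\in(1,2)$ (so $p>2$) the exponent dominates the combinatorial factor precisely once $k\geq (\log N_1)^{(q-1)/(2-q)}$, and summing over $k$ in $[(\log N_1)^{(q-1)/(2-q)},r_0N_1]$ yields \eqref{eq:TH1-p>2}. For $q=2$ ($p=2$) one has $N_1^{(p-2)/2}=1$; setting $k=rN_1$ and using $\log\binom{N_1}{rN_1}\leq N_1 S(r)$, the total tends to zero whenever $\alpha$ is sufficiently small in terms of $r$ and $S(r)$; the precise bounds $\alpha<\frac{1}{3}\sqrt{r/(1-r)}$ and $\alpha<\sqrt r/(25S(r))$ of \eqref{eq:TH1-p=2} come from sharpening the constants via, respectively, the signal-to-noise inequality for the overlap $(\s,\xi^{(\mu)})/N_1$ and a finer variance estimate supplemented by the first-order local-minimum condition at each wrong bit.

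The main technical obstacle is the concentration inequality itself: for $p>2$ the summands are sub-Weibull of index $2/p<1$ and not sub-Gaussian, so Gaussian and Bernstein regimes must be combined, and the anomalous realisations where $|A_\nu|$ is very small (for which the Taylor expansion breaks down) have to be controlled by truncation. A secondary subtlety is obtaining the explicit $\sqrt r/S(r)$ scaling in the Hopfield-like case $p=2$, which seems to require treating the local-minimum conditions at all $k$ wrong bits as a correlated system of lower bounds on a family of near-Gaussian random variables rather than just using the scalar deep condition.
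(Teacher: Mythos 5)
Your overall strategy coincides with the paper's: union-bound over Hamming spheres around each pattern, split $(\s,\xi^{(\nu)})$ into the part $B_\nu$ carried by the flipped bits and the part $A_\nu$ carried by the unflipped bits (the paper's $X_J^{(\nu)},Y_J^{(\nu)}$), exploit their independence, and apply a two-regime Bernstein/sub-Weibull tail bound of index $2/p$. The paper does not Taylor expand, however; it works with the exact antisymmetric function $\Phi_p(x,y)=|x+y|^p-|x-y|^p$ (Lemma \ref{lemma:repr-inutile}), bounds its Orlicz $\psi_{2/p}$-norm directly (Lemma \ref{lemma:XYsubG}), and feeds this into a general sub-Weibull concentration inequality (Proposition \ref{prop:Hoeffging-GEN}). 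That sidesteps the truncation around small $|A_\nu|$ that you flag as an obstacle: one simply uses $|\Phi_p(x,y)|\leq 2p|x|(|x|+|y|)^{p-1}$ and the sub-Gaussianity of both arguments, with no bulk/anomalous case split.

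There is also an internal inconsistency in your argument that you should resolve. Your per-configuration estimate $\exp(-ckN_1^{(p-2)/2}/\alpha)$ (and its sub-Weibull companion) already beats $\log\binom{N_1}{k}\lesssim k\log N_1$ for \emph{every} $k\geq 1$ once $p>2$, so "the exponent dominates the combinatorial factor precisely once $k\geq(\log N_1)^{(q-1)/(2-q)}$" does not follow from what you wrote -- your estimate would give an $O(1)$ radius. The polylog threshold in the paper arises from a genuinely weaker per-configuration bound: Lemma \ref{lemma:crux1} controls the probability of the \emph{relaxed} event $H^{(p)}(\xi^{(\mu)})-H^{(p)}(\s)\geq -t$ with the deliberately chosen gap $t(r)=1-(1-2r)^p-r^{p/2}$, so the signal term reduces to exactly $(N_1r)^{p/2}=k^{p/2}$; the resulting bound $\exp(-ck^{p/2}/\alpha)$ only dominates $k\log N_1$ for $k\gtrsim(\log N_1)^{2/(p-2)}$, which is where the stated scale comes from. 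Finally, for $q=2$ the paper does \emph{not} use the first-order local-minimum conditions at the individual wrong bits (that device is reserved for the negative result, Theorem \ref{TH2}); the $p=2$ case of Theorem \ref{TH1} is just the same scalar comparison with $p$ set to $2$ in Lemma \ref{lemma:crux1}, and the constants in the condition on $\alpha$ come from optimizing that single Bernstein bound against $S(r)$.
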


Theorem \ref{TH1} identifies the models with $q<2$ as excellent in pattern reconstruction: there are deep minima located few flips away from the patterns (in fact polylog flips, see (\ref{eq:TH1-p>2})) and no deeper minima appear in an extended region. We observe that albeit we formulate Theorem \ref{TH1} in terms of local minima, we proved a stronger statement regarding all points in a Hamming ball about the pattern. Namely we show that exploring all the points in a large Hamming ball centred at any pattern, to find a point with lower energy we need to go very close to the centre. 

We do not attempt here at precisely characterising the radius $r_0$, the basin of attraction of the patterns, the maximal $\a$ allowing retrieval or any of the constants in the play. Indeed the numbers appearing in the case $q=2$ of the above Theorem carry no special meaning.

Local minima are not directly related to the typical configurations of (\ref{eq:p}). However it is well known that any algorithmic search of typical configurations will finish to find a hopefully representative local minimum. This can be done by the usual {\sc flip} algorithm, that is greedy flipping of one units at time decreasing the energy until no more decreasing is possible. Therefore $\mathtt{dLM}_{N_1}^{(\mu)}$ has a direct interpretation in terms of retrieval. Take for instance $q<2$. By the proof of Theorem \ref{TH1} it follows that any {\sc flip} search initialised for instance at $\xi^{(\mu)}$ will end up in a point of $\mathtt{dLM}_{N_1}^{(\mu)}$ falling only $(\log N_1)^{\frac{q-1}{2-q}}$ flips away from the pattern, which means that only few bits are mis-retrieved. 

Somewhat in the opposite direction, the next result shows that for $q>2$ in (\ref{eq:priorZ}), the local minima of (\ref{eq:Hp}) are quite far from the patterns. 
\begin{theorem}\label{TH2}
Let $q\geq2$, $r\in[0,\frac12]$ and let $\a_q(r):=S(r)$ for $q\neq2$ and $\a_2(r):=S(r)/(1-2r)^2$. There is a numerical constant $f(q)>0$ such that every $r\in(0,\frac12)$ and for all $\a\geq f(q)\a_q(r)$ we have w.h.p.
\be\label{eq:TH2}
\Ham\left(\mathtt{LM}_{N_1}, \xi^{(\mu)}\right)\geq \lfloor rN_1\rfloor\,.
\ee
\end{theorem}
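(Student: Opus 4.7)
\emph{Proof plan for Theorem \ref{TH2}.} I would proceed via a first-moment calculation. By the $\pm 1$-symmetry of the pattern distribution, for each fixed $\mu\in[N_2]$ we may take $\xi^{(\mu)}=\mathbf{1}$ without loss of generality, and $P(\sigma\in\mathtt{LM}_{N_1})$ then depends only on $k=\Ham(\sigma,\mathbf{1})$. A union bound over $\mu$ and over the $\binom{N_1}{k}$ configurations at each distance, combined with $\binom{N_1}{k}\leq e^{N_1 S(k/N_1)}$ and $\sup_{k\leq rN_1}S(k/N_1)=S(r)$, reduces the theorem to the exponential bound
$$P(\sigma_k\in\mathtt{LM}_{N_1})\leq \exp\bigl(-c(q)\,\alpha\,g_q(k/N_1)\,N_1\bigr),$$
with $g_2(r)=(1-2r)^2$ and $g_q(r)$ bounded below by a positive constant for $q>2$, whereby $\alpha\geq f(q)\alpha_q(r)$ forces the first moment to vanish.

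Second, I would reformulate the local-minimum condition. Writing $f(\sigma):=\sum_{\mu'}|S_{\mu'}|^p$ with $S_{\mu'}:=(\xi^{(\mu')},\sigma)$, the condition $\sigma\in\mathtt{LM}_{N_1}$ is that $\Delta_i:=f(\sigma^{(i)})-f(\sigma)\leq 0$ for all $i\in[N_1]$. For $p=2$ one has exactly $\Delta_i=-4\sigma_i h_i+4N_2$ with local field $h_i:=\sum_{\mu'}\xi_i^{(\mu')}S_{\mu'}$, which becomes $\sigma_i h_i\geq N_2$; for $p<2$, a second-order Taylor expansion of $|x|^p$ around $x=S_{\mu'}$ (valid on the high-probability event $\min_{\mu'}|S_{\mu'}|\geq\delta\sqrt{N_1}$, justified by anti-concentration of the Rademacher sums $S_{\mu'}$) leads to an analogous inequality with $h_i=\sum_{\mu'}\xi_i^{(\mu')}|S_{\mu'}|^{p-1}\mathrm{sign}(S_{\mu'})$ and a positive deterministic drift of order $\alpha N_1^{p/2}$ replacing $N_2$, coming from the second Taylor coefficient. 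In both cases, splitting $h_i$ into a signal part (from $\mu$, of order $(N_1-2k)^{p-1}$) and a centred noise part (variance of order $\alpha N_1^p$), the local-minimum condition at an error site $i\in E:=\{j:\sigma_j\ne\xi_j^{(\mu)}\}$ forces a signed large-deviation event for the noise.

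The joint probability of the $N_1$ site-events is controlled via a conditional-independence device. Conditioning on $S_{\mu'}^{\setminus i}:=\sum_{j\ne i}\sigma_j\xi_j^{(\mu')}$ for all $\mu'\ne\mu$, the per-site noise reduces to a Rademacher sum over the independent family $(\xi_i^{(\mu')})_{\mu'\ne\mu}$; Hoeffding's inequality then yields
$$P(\Delta_i\leq 0\mid S^{\setminus i})\leq \exp\Bigl(-\frac{\tau_i^2}{2\sum_{\mu'}(S_{\mu'}^{\setminus i})^2}\Bigr),$$
where $\tau_i$ is the per-site threshold determined by signal plus drift. Taking the product over $i\in E$ (the near-independence of noises at distinct sites is justified because the residual $O(1/N_1)$ cross-correlations cost only constants on the exponential scale), and using the law-of-large-numbers concentration $\sum_{\mu'}(S_{\mu'}^{\setminus i})^2\asymp\alpha N_1^p$, delivers the announced exponential bound: for $q>2$ the drift dominates ($\tau_i\asymp\alpha N_1^{p/2}$), producing an $r$-uniform $g_q$; for $q=2$ the signal dominates in the relevant regime ($\tau_i\asymp (1-2r)N_1$), producing the $(1-2r)^2$ factor in $g_2$.

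The main obstacle is the interplay between signal and drift contributions to $\tau_i$, particularly in the $q=2$ case where both scale comparably and the relevant large-deviation regime depends on the exact range of $\alpha$; a careful split of the parameter space is required to obtain the right rate. A secondary technical point is the uniform control of the Taylor expansion for $p<2$, requiring anti-concentration of $\min_{\mu'}|S_{\mu'}|$ over the $N_2$ patterns, which is delivered by Berry--Esseen-type estimates. The constant $f(q)$ in the theorem absorbs these losses.
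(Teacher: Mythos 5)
Your outline agrees with the paper through the first-moment reduction and the reformulation of the local-minimum condition as per-site inequalities (roughly the content of the paper's Step~1 and Lemma~\ref{lemma:reprH-absence}), but the step where you actually control the joint probability of the $N_1$ site-events has a genuine gap, and it is precisely the hard part of the argument.

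You propose to condition, for each $i$, on $S_{\mu'}^{\setminus i}$ and then ``take the product over $i\in E$,'' justifying it by the claim that the residual $O(1/N_1)$ cross-correlations ``cost only constants on the exponential scale.'' This is not a proof: the conditioning $\sigma$-field changes with $i$, so the resulting conditional bounds cannot simply be multiplied, and with $\Theta(N_1)$ events each carrying $O(1/N_1)$ correlation the cumulative effect on the exponent is $O(1)$, with no control on the sign. The paper handles this with a different and essential idea (Step~2, ``disentangling by the FKG inequality,'' following Loukianova's treatment of the Hopfield model): one conditions \emph{once} on the $\sigma$-field $\mathcal M$ generated by all magnetizations $M^{(\mu)}=\frac1{N_1}\sum_i\xi_i^{(\mu)}$. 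Conditionally on $\mathcal M$, the law of each row $\xi^{(\mu)}$ is a permutation distribution with fixed sum, for which FKG holds; the site-events are then increasing in the $\xi_i^{(\mu)}$, so the probability of their \emph{intersection} is bounded by the \emph{product} of the conditional probabilities. That is the precise mechanism that replaces your heuristic, and without it the exponential bound $\exp(-c\,\alpha\,g_q\,N_1)$ you need does not follow. After this decoupling, conditional Hoeffding (as you suggest) and concentration of $\|M\|_p^p$, $\|M\|_{2p-2}^{2p-2}$ complete the argument, matching the paper's Steps~3--5.

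A secondary issue: your claim that $\min_{\mu'}|S_{\mu'}|\geq\delta\sqrt{N_1}$ holds with high probability (to justify a global Taylor expansion for $p<2$) is false. With $N_2=\alpha N_1^{p_+/2}$ i.i.d.\ overlaps of scale $\sqrt{N_1}$, the minimum over $\mu'$ is typically of order $\sqrt{N_1}/N_2\ll\sqrt{N_1}$, and indeed $\min_{\mu'}|S_{\mu'}|$ can vanish. The paper's Lemma~\ref{lemma:reprH-absence} does not assume away the small-overlap region; it expands separately on the events $\{|Z_J^{(\mu)}|\geq 2N_1^{-1/2}\}$ and $\{|Z_J^{(\mu)}|< 2N_1^{-1/2}\}$, and the whole point of the careful case analysis there is that the drift constant $d(p)$ remains strictly positive even accounting for the patterns with small overlap.
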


For sake of brevity the value of the numerical constant $f(q)$ is not specified in the statement of the previous theorem, but can be determined following its proof. Again we stress that we did not aim at optimising the constants.

According to Theorem \ref{TH2}, if $q>2$ one could still hope for retrieval with a very small amount of hidden variables, \ie for $\a$ small enough (indeed for $\a=0$ reconstruction is possible, see \cite{PRE}), but for $\a$ larger than a given constant no recovery is allowed. For $q=2$ the situation improves a bit in the sense that pattern reconstruction becomes less and less efficient as $\a$ grows. 

The paper \cite{PRE} showed that RBMs with hidden prior interpolating between a Gaussian and a bimodal symmetric distribution exhibit retrieval at finite $\a>0$, which disappears in the degenerate case when the Gaussian part is switched off. It is also argued that such a lack of retrieval should persist at least for any compactly supported hidden prior. This is demonstrated using non-rigorous replica computations and numerics. We give here the first mathematical confirm of these findings, as Theorem \ref{TH1} (for $q=2$) implies pattern retrieval if in the interpolating prior the Gaussian part is present, whatever small, and Theorem \ref{TH2} is a strong indication for lack of retrieval for hidden prior with a Bernoulli $\pm1$ distribution (for which we should read $q=\infty$). 

\subsection{Related literature}

The results here presented mark a neat difference in the retrieval capabilities of RBMs with hidden priors (\ref{eq:priorZ}) with $q<2$ (very good capabilities) and $q>2$ (not so good) with a transition at the Gaussian tail case $q=2$. As already remarked, a notable instance of the case $q=2$ is the Hopfield model, for which a similar analysis at zero temperature was done in \cite{newman} (analog of Theorem \ref{TH1}), \cite{lou} (analog of Theorem \ref{TH2}) and \cite{talahop} in the attempt of proving the picture of \cite{amit}. When comparing these papers to ours, we underline that we do not seek to characterise any of our estimates with the best possible constants, which was instead a relevant component of all these previous papers. In particular by Theorem \ref{TH1} it follows that in the case $q=2$ we observe retrieval for $\alpha\leq 0.04$, much less than the threshold $\a\leq 0.14$ computed by Amit, Gutfreund and Sompolinsky. However from our analysis it is clear that this critical threshold is not a specific of the Hopfield model, but it can be achieved universally for all the models whose hidden priors has Gaussian tails.

We exploit and make mathematically precise the heuristics of \cite{PRE}. Namely we use that the tail of the hidden prior determines the behaviour for large argument of the energy function of the associative network (around zero it is always quadratic). It is exactly this asymptotic that governs retrieval: the more convex the better. 
Mathematically speaking the introduction of the hidden layer is a way to linearise the energy function (over the visible units) and different prior distributions for the hidden layer correspond to different associative networks. Similar ideas have been used by \cite{tubiana}, \cite{PRE}, \cite{cocco}, \cite{bulso}, \cite{treves} to study the performance of the RBMs with varying hidden unit statistics. 

The {\sc flip} algorithm is a very natural choice to explore the energy landscape of RBMs and indeed it is essentially the original network dynamics proposed in \cite{hop}. This gives a nice connection with the local max-cut problem as analysed for instance in \cite{maxcut2} and \cite{maxcut1}, even though here we exploit crucially the presence of the patterns, which constitute a special class of local minima. This is even more clear by comparing with the analysis for the Sherrington-Kirkpatrick model of \cite{SK}. 

Many other dynamics have been proposed alternative to the {\sc flip} algorithm mainly for the Hopfield model and we will not give here an account on that (see the recent work \cite{zamponi} and the references therein). We just mention that the dynamics analysed in \cite{tubiana} and \cite{marinari}, which is a zero-temperature version of the alternate Gibbs sampling typically used to train RBMs, is in spirit very close to our zero-temperature reduction.

\subsection{Notations}\label{sect:nota}

Throughout the paper $p,q\geq1$ will always be H\"older conjugate, that is $\frac 1p+\frac1q=1$ and similarly for $q_-, p_+$, with $q_-:=\min(2,q)$, $p_+:=\max(2,p)$. 
$C,c$ everywhere denote positive absolute constants which may change from formula to formula. We write $X\lesssim Y$ if $X\leq CY$ and $X\simeq Y$ if $Y\lesssim X\lesssim Y$. Sometimes we write $\simeq_a$ or $\lesssim_a$ to stress the dependence of the constants $C$ above on a parameter $a$. We indicate by $(\cdot,\cdot)$ the inner product in $\R^{N_1}$ or $\R^{N_2}$ and the meaning will be always clear from the context and by $\|\cdot\|_p$ the $\ell_p$-norms. $1$ may represent the vector in $\R^{N_1}$ or in $\R^{N_2}$ with all entries equal to $1$. $B_{N}^{(q)}$ is the $\ell_q$ centred ball of radius one in $\R^N$. $\widehat S^{N_1-1}_{\mu,R}, \widehat B^{N_1}_{\mu,R}$ denote respectively the $N_1$-dimensional Hamming ball and sphere centred at $\xi^{(\mu)}$ of radius $R$. If $v\in\R^N$ and $J\subset [N]$ we denote by $v_J$ a vector in $\R^{|J|}$ such that $(v_J)_i=v_{j_i}$ if $J=\{j_1,\ldots, j_{|J|}\}$. To any $J\subset [N]$ we also associate a {\sc flip} operator $F_J$ defined by $(F_J v)_i=-v_i$ if $i\in J$ and $(F_J v)_i=v_i$ if $i\notin J$.
We will use the following Orlicz norms:
\be\label{eq:Orl}
\|Z\|_{\psi_r}:=\inf\left\{\l>0\,:\,E\left[\psi_r\left(\frac{|Z|}{\l}\right)\right]<2\right\}\,,\qquad r>0\,,
\ee
where $\psi_r(x)=e^{x^r}$ for any $x>0$ for $r\geq1$, while for $r\in(0,1)$ there are $c(r),x(r)$ such that for $x\in(0,x(r))$ it is $\psi_r(x)=c(r)x$. 
We underline that, setting $q_s:=\sup\{q'>1\,:\,\|Z\|_{\psi_{q'}}<\infty\}\,$
we have $P(|Z|\geq t)\simeq e^{-|t|^q_s}$ (we convey that bounded random variables have finite $\psi_\infty$-norm).
Bearing in mind the definition (\ref{eq:defS}), we will often use the standard bound for $r\in[0,1]$
\be\label{eq:standard-bound}
\card \widehat S^{N_1-1}_{\mu,\lfloor rN_1\rfloor}= \binom{N_1}{\lfloor rN_1\rfloor}\leq e^{N_1S(r)}\,.
\ee
We denote the transpose patterns $\tilde\xi^{(i)}$ by $\tilde\xi^{(i)}_\mu:=\xi_i^{(\mu)}$, $i\in[N_1]$, $\mu\in[N_2]$. Sometimes we write $\widehat \xi:=\xi/\sqrt{N_1}$. 
$A^c$ is the complement of the set $A$. We say that the event $A$ occurs with high probability (w.h.p.) if for all $x>0$, for all sufficiently large $N_1$ it holds $P(A)\geq 1-N_1^{-x}$. 

\subsection*{Acknowledgements} The author thanks David Belius for many valuable suggestions regarding the presentation of the results. 


\section{Zero temperature reduction}\label{sect:T0}

In this section, which is in part independent on the rest of the paper, we study some interesting properties of the Hamiltonian (\ref{eq:Hp}).

First we show that the single pattern energy is close to the ground state, so providing a motivation for the normalisation factors in (\ref{eq:p}) and (\ref{eq:alpha}). We give a lower bound for the ground state energy linear in $\a$. To do so we do not actually need binary patterns.

\begin{proposition}\label{Prop:groundstate}
Let $\xi^{(1)},\ldots, \xi^{(N_2)}$ be independent vectors in $\R^{N_1}$ with i.i.d. centred sub-Gaussian entries. It holds for $p\geq1$
\be\label{eq:notfar}
\inf_{\s\in \{-1,1\}^{N_1}} H^{(p)}(\s;\xi)\gtrsim_p -(1+\a)
\ee
with probability larger than $1-e^{-c\a^{\frac2p}N_1}$. 
\end{proposition}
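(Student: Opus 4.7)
The idea is to control $-H^{(p)}(\s;\xi) = N_1^{-\kappa(p)}\sum_\mu |(\xi^{(\mu)},\s)|^p$ for fixed $\s$ by a concentration argument, and then take a union bound over the $2^{N_1}$ sign configurations in $\{-1,1\}^{N_1}$.

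Fix $\s \in \{-1,1\}^{N_1}$. The inner product $(\xi^{(\mu)},\s) = \sum_{i \in [N_1]} \s_i \xi_i^{(\mu)}$ is a sum of $N_1$ independent centred sub-Gaussian entries with the $\s_i$ merely acting as signs, so by Hoeffding $\|(\xi^{(\mu)},\s)\|_{\psi_2} \lesssim \sqrt{N_1}$ uniformly in $\s$ and $\mu$. Hence the random variables $Y_\mu(\s) := |(\xi^{(\mu)},\s)|^p$, $\mu \in [N_2]$, are i.i.d.\ sub-Weibull of index $2/p$, with $E Y_\mu(\s) \lesssim_p N_1^{p/2}$ and $\|Y_\mu(\s)\|_{\psi_{2/p}} \lesssim_p N_1^{p/2}$. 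Unpacking the definitions of $\kappa(p) = 1 + p - p/p_+$ and $\a = N_2/N_1^{p_+/2}$ one checks $N_2\, E Y_\mu(\s)/N_1^{\kappa(p)} \lesssim_p \a$, so the mean contribution to $-H^{(p)}(\s;\xi)$ already sits at the required scale $O_p(\a)$, and only fluctuations around the mean need controlling.

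For the fluctuations I would appeal to a Bernstein-type inequality for sums of i.i.d.\ sub-Weibull variables (see e.g.\ Vershynin's \emph{High-Dimensional Probability}, or Kuchibhotla--Chakrabortty): for every $u > 0$,
\begin{equation*}
P\left(\sum_{\mu=1}^{N_2} \bigl(Y_\mu(\s) - E Y_\mu(\s)\bigr) \geq u\right) \leq 2 \exp\left(-c_p \min\left(\frac{u^2}{N_2 N_1^{p}},\ \left(\frac{u}{N_1^{p/2}}\right)^{2/p}\right)\right).
\end{equation*}
Setting $u = M(1+\a) N_1^{\kappa(p)}$ with $M$ a large absolute constant and using $\kappa(p) - p/2 = p/2$ for $p \geq 2$ and $\kappa(p)-p/2 = 1$ for $p \leq 2$, a case-by-case check of which of the two terms of the minimum is binding yields an exponent bounded below by $c_p' M^{2/p} (1+\a)^{2/p} N_1$. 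A union bound over the $2^{N_1}$ sign vectors then gives
\begin{equation*}
P\left(\inf_{\s \in \{-1,1\}^{N_1}} H^{(p)}(\s;\xi) \leq -C_p(1+\a)\right) \leq 2 \exp\left(-\bigl(c_p' M^{2/p}(1+\a)^{2/p} - \log 2\bigr) N_1\right),
\end{equation*}
and choosing $M$ sufficiently large (depending only on $p$) absorbs the entropy $N_1 \log 2$, leaving an exponent of at least $c\,\a^{2/p} N_1$, as claimed.

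The main technical obstacle is the sub-Weibull Bernstein step itself together with its interplay with the union bound. The tail of $Y_\mu(\s)$ switches character at $p = 2$ (sub-exponential or lighter for $p \leq 2$, genuinely heavy-tailed for $p > 2$), so the two terms of the Bernstein minimum play different roles in the two regimes: for $p > 2$ the sub-Weibull tail term is binding, whereas for $p \leq 2$ it is the sub-Gaussian/sub-exponential term. One has to track the constants in each regime carefully so that, whichever term is binding, the $N_1 \log 2$ cost of the union bound is dominated by the Bernstein exponent uniformly in $\a$ and $N_1$, and the claimed dependence $\a^{2/p}$ survives.
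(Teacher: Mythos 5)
Your approach --- pointwise sub-Weibull concentration for $\sum_\mu |(\xi^{(\mu)},\s)|^p$ at fixed $\s$, followed by a union bound over $\{-1,1\}^{N_1}$ --- is genuinely different from the paper's. The paper instead writes $\sum_\mu |(\xi^{(\mu)},\s)|^p = |\sup_{\t\in B^{(q)}_{N_2}}\sum_\mu (\xi^{(\mu)},\s)\t_\mu|^p$, so that the quantity to control is a bilinear supremum over the discrete cube \emph{and} the $\ell_q$-ball; a covering of $B^{(q)}_{N_2}$ (Euclidean balls via Sudakov for $p\geq 2$, $\ell_q$-balls by volume for $p<2$) reduces the problem to sub-Gaussian linear statistics. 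Both strategies are viable; yours is arguably more elementary and avoids the covering step, at the cost of having to apply the heavy-tailed ($\ell=2/p<1$) Bernstein inequality when $p>2$, whereas the paper stays entirely within sub-Gaussian estimates for the linear forms.

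There is, however, an error in your ``case-by-case check,'' for $p<2$. For $\ell=2/p\in(1,2]$, the Bernstein tail term is $u^{\ell}/(K^{\ell}N_2^{\ell-1})$ --- note the extra $N_2^{\ell-1}$ in the denominator, which is absent in the $\ell\leq 1$ regime (this is exactly Proposition A.1 of the paper). With $u\simeq M(1+\a)N_1^{1+p/2}$, $K\simeq N_1^{p/2}$, $N_2=\a N_1$, the two terms of the minimum come out to $M^2(1+\a)^2N_1/\a$ and $(M(1+\a))^{2/p}N_1/\a^{2/p-1}$. For $\a\geq 1$ both scale like $(1+\a)N_1$, \emph{not} $(1+\a)^{2/p}N_1$ --- since $2/p>1$ for $p<2$, the two are genuinely different for large $\a$. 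So your claimed lower bound $c_p'M^{2/p}(1+\a)^{2/p}N_1$ on the Bernstein exponent fails for $p<2$ and $\a$ large, and the stated $\a^{2/p}$-dependence of the final probability bound does not follow from your casework in that regime. For $p\geq 2$ (where $\ell\leq 1$ and the $N_2^{\ell-1}$ factor is absent) and for $p<2$ with $\a\lesssim 1$, the computation does give $(1+\a)^{2/p}N_1$ as you claim. Since $\a$ is a fixed constant throughout the paper, this does not affect the usability of the result --- a bound $1-e^{-c(p,\a)N_1}$ is all that is ever used --- but the uniform-in-$\a$ exponent you assert is an overclaim and should be restricted to $p\geq 2$ or to $\a$ bounded.
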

To prove Proposition \ref{Prop:groundstate} we need the following auxiliary lemma. 

\begin{lemma}\label{lemma:Psup}Let $\xi^{(1)},\ldots, \xi^{(N_2)}$ be independent vectors in $\R^{N_1}$ with i.i.d. centred sub-Gaussian entries.
Let $p\geq1$. For all $t\gtrsim_p(1+\a)^{\frac1p}$
\be\label{eq:Psup}
P\left(\frac{1}{N_1^{\max(0,\frac{q-2}{2q})}}\sup_{\s\in \frac{1}{\sqrt N_1}\{-1,1\}^{N_1}} \sup_{\t\in B_{N_2}^{(q)}} \frac{(\xi^{(\mu)},\s)\t_{\mu}}{\sqrt{N_1}}\geq t\right)\leq 2e^{-ct^2N_1}\,
\ee
where $c>0$ depends only on the distribution of $\xi_1^{(1)}$. 
\end{lemma}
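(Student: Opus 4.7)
The plan is to exploit H\"older duality to reduce the double supremum to an $\ell_p$-norm of a sub-Gaussian vector, estimate its expectation by a direct moment bound, and handle the remaining deviation via Talagrand-type concentration combined with a union bound over the Hamming hypercube.

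The first step will be the identity $\sup_{\t\in B_{N_2}^{(q)}}\sum_\mu y_\mu(\s)\,\t_\mu=\|y(\s)\|_p$, where $\s=\eta/\sqrt{N_1}$, $\eta\in\{-1,1\}^{N_1}$, and $y_\mu(\s):=(\xi^{(\mu)},\s)$. This collapses the inner supremum and reduces the problem to bounding $\sup_\s\|y(\s)\|_p$ divided by $\sqrt{N_1}\,N_1^{\max(0,(q-2)/(2q))}=N_1^{\max(1/2,1/p)}$. For fixed $\s$ with $\|\s\|_2=1$, the coordinates $y_\mu$ are i.i.d.\ sub-Gaussian of parameter $O(1)$, so the moment bound $E[|y_\mu|^p]\lesssim p^{p/2}$ together with Jensen gives $E[\|y(\s)\|_p]\lesssim \sqrt p\,N_2^{1/p}$. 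Plugging in $N_2=\a N_1^{p_+/2}$ and using the H\"older conjugacy $1/p+1/q=1$, this expectation, once normalised, is $\lesssim_p (1+\a)^{1/p}$ uniformly in $\s$.

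To handle the deviation from the expectation, I will view $G\mapsto\|y(\s,G)\|_p$, with $G_{i,\mu}:=\xi^{(\mu)}_i$, as a convex function of the matrix entries, Lipschitz in the Frobenius norm with constant $\sup_{\t\in B_{N_2}^{(q)}}\|\t\|_2=\max(1,N_2^{1/2-1/q})$. Talagrand's concentration inequality for convex Lipschitz functions of sub-Gaussian random variables then yields a sub-Gaussian deviation from the mean. The key arithmetic observation is that, after rescaling by the normalisation and substituting $N_2=\a N_1^{p_+/2}$, the various powers of $N_1$ coming from the Lipschitz constant and from $N_2$ collapse exactly (by $1/p+1/q=1$) into a tail of the form $\exp(-c\,s^2 N_1)$ for the centred and normalised sup, with $c$ depending on $\a$ only through a bounded factor. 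A union bound over the $2^{N_1}$ vertices $\s\in\{-1,1\}^{N_1}/\sqrt{N_1}$ is absorbed into this exponent for $s$ larger than an $\a$-dependent constant; combining with the expectation bound produces $P\leq 2e^{-ct^2N_1}$ for $t\gtrsim_p(1+\a)^{1/p}$.

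The main obstacle will be the concentration step: Talagrand's convex Lipschitz inequality is classical for bounded random variables but must be extended with care to the unbounded sub-Gaussian setting (e.g.\ via a Ledoux--Talagrand-type refinement adapted to sub-Gaussian laws), and the delicate balance between the Lipschitz constant, the $\a$-dependent variance proxy, and the $N_1$-entropy of the hypercube has to be tracked exactly to secure the $e^{-ct^2N_1}$ tail. A more transparent alternative I would fall back on is the dual reformulation $\sup_{\s,\t}\s^T G\t=\sup_{\epsilon\in\{-1,1\}^{N_1}}\|G^T\epsilon\|_p/\sqrt{N_1}$, bounded by a case split: a standard sub-Gaussian operator-norm estimate for $q\geq2$ (where $\|G^T\epsilon\|_p\leq N_2^{1/p-1/2}\|G\|_{\text{op}}\sqrt{N_1}$), and direct concentration of $\ell_p$-norms of sub-Gaussian vectors for $q\leq2$.
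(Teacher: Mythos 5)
Your route is genuinely dual to the paper's: you collapse the $\t$-supremum analytically via $\sup_{\t\in B^{(q)}_{N_2}}(y,\t)=\|y\|_p$ and then handle the $\s$-supremum by brute-force union bound over the $2^{N_1}$ vertices of the cube, whereas the paper collapses the $\s$-supremum analytically (choosing $\s_i=\sign\sum_\mu\xi^{(\mu)}_i\t_\mu$, turning the double supremum into $\frac1{N_1}\sum_i|(\tilde\xi^{(i)},\t)|$), applies the plain sub-Gaussian Hoeffding bound for each fixed $\t$, and then handles the $\t$-supremum via a metric-entropy (Sudakov) covering of $B^{(q)}_{N_2}$, whose cardinality $e^{cN_2^{2/p}}$ (or $e^{cN_2}$ for $p<2$) scales correctly with $\a$. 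Both directions are viable and produce the same $e^{-ct^2N_1}$ rate after the cost of the union/cover is absorbed by the $t\gtrsim_p(1+\a)^{1/p}$ threshold; your bookkeeping of the normalisation $N_1^{\max(1/2,1/p)}$, the Lipschitz constant $\max(1,N_2^{1/2-1/q})$, and the expectation $\lesssim_p(1+\a)^{1/p}N_1^{\max(1/2,1/p)}$ all check out.

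The genuine soft spot, which you flag yourself, is the concentration step. Talagrand's convex--Lipschitz inequality is stated for bounded random variables and does not directly apply to unbounded sub-Gaussian $\xi$; invoking it is not a turnkey move here, and the "extended with care" clause is where your write-up would need a real argument. You do not actually need Talagrand: for each fixed $\s$ the coordinates $y_\mu=(\xi^{(\mu)},\s)$ are independent sub-Gaussian, so $\{|y_\mu|^p\}_\mu$ are i.i.d.\ $\psi_{2/p}$ variables with Orlicz norm $O(1)$, and the paper's own Proposition~\ref{prop:Hoeffging-GEN} (with $\ell=2/p$) gives the needed deviation bound for $\sum_\mu|y_\mu|^p$ around its mean of order $N_2$; taking a $1/p$-th root and combining with the expectation estimate yields the single-$\s$ tail $e^{-cs^2N_1}$ without any convex-Lipschitz machinery. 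With that substitution your scheme becomes airtight and is essentially on par with the paper's, just running the two suprema in the opposite order. The paper's order has a mild advantage in that the remaining supremum lives over $B^{(q)}_{N_2}$, whose entropy automatically matches the Hoeffding variance proxy $\|\t\|_2^2$; your $2^{N_1}$ union bound is cruder but still works because $\a$ is held fixed.
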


\begin{proof}
We introduce the transpose patterns $\tilde\xi^{(i)}$ by $\tilde\xi^{(i)}_\mu:=\xi_i^{(\mu)}$, $i\in[N_1]$, $\mu\in[N_2]$.
First of all we note that
\be\label{eq:notethat}
\frac{1}{N_1^{\max(0,\frac{q-2}{2q})}}\sup_{\s\in \frac{1}{\sqrt N}\{-1,1\}^{N_1}} \sum_{i\in[N_1]}\sum_{\mu\in[N_2]}\frac{\xi_i^{(\mu)}\s_i\t_\mu}{\sqrt{N_1}}=\frac1{N_1}\sum_{i\in[N_1]}\frac{\left|(\tilde\xi^{(i)},\t)\right|}{N_1^{\max(0,\frac{q-2}{2q})}}\,.
\ee
Moreover, since for all $\t\in B_{N_2}^{(q)}$
\be\label{eq:normatau}
\|\t\|_2\leq N_2^{\max(0,\frac{q-2}{2q})}\,,
\ee
we have
\be
P\left(\frac{1}{N_1^{\max(0,\frac{q-2}{2q})}}\left|(\tilde\xi^{(i)},\t)\right|\geq t\right)\leq 2e^{-\frac{t^2N^{\max(0,\frac{q-2}{q})}_1}{2N_2^{\max(0,\frac{q-2}{q})}\|\xi^{(1)}_1\|^2_{\psi_2}}}\,.
\ee
The r.h.s. of (\ref{eq:notethat}) is the sum of independent sub-Gaussian random variables with 
$$
E[|(\tilde\xi^{(1)},\t)|]\leq \sqrt{E[|(\tilde\xi^{(1)},\t)|^2]}\lesssim N_2^{\max(0,\frac{q-2}{2q})}\,,
$$ 
thus for all $\t\in B_{N_2}^{(q)}$ for $t\gtrsim1$ we have
\be
P\left(\frac1{N_1}\sum_{i\in[N_1]}\frac{\left|(\tilde\xi^{(i)},\t)\right|}{N_1^{\max(0,\frac{q-2}{2q})}}\geq t\right)\leq 2e^{-\frac{t^2N^{1+\max(0,\frac{q-2}{q})}_1}{2N_2^{\max(0,\frac{q-2}{q})}\|\xi^{(1)}_1\|^2_{\psi_2}}}\,
\ee
for some $c>0$ depending only on the distribution of $\xi_1^{(1)}$. 

Next we cover $B^{(q)}_{N_2}$ with a number of balls in $\R^{N_2}$ with some small radius $\e>0$. For $p\geq2$ we can use Euclidean balls and the Sudakov inequality gives a bound on the minimal number $N(B^{N_2}_q,\e B^{N_2}_2)$ of such balls
$$
N(B^{N_2}_q,\e B^{N_2}_2)\leq e^{cN_2^{\frac2p}}\,
$$
(here we used that for a Gaussian vector $g$ $E[\max_{\t\in B^{N_2}_q}(\t,g)]=E[\|g\|_p]\simeq N_2^{\frac1p}$). 

For $p\in(1,2)$ we use $\ell_q$-balls and we have
$$
N(B^{N_2}_q,\e B^{N_2}_q)\leq e^{cN_2}
$$
(in the two estimates above the constants $c$ depends on $\e$ in a way we do not keep track of).

Assume now $(1+\a)^{\frac1p}\lesssim_p t$ (this is to take into account also the behaviour for small $\a$). By the union bound
for $p\geq2$ we get
\be
P\left(\frac{1}{N_1^{\max(0,\frac{q-2}{2q})}}\sup_{\s\in \frac{1}{\sqrt N}\{-1,1\}^{N_1}} \sup_{\t\in B_{N_2}^{(q)}} (\widehat\xi\s,\t)\geq t\right)\leq 2e^{cN^{\frac2p}_2-cN_1t^2}\leq 2e^{-ct^2N_1}\,.
\ee

Similarly for $p\in(1,2)$
\be
P\left(\frac{1}{N_1^{\frac{q-2}{2q}}}\sup_{\s\in \frac{1}{\sqrt N}\{-1,1\}^{N_1}} \sup_{\t\in B_{N_2}^{(q)}} (\widehat\xi\s,\t)\geq t\right)\leq 2e^{cN_2-c\left(\frac{N_1}{N_2}\right)^{\frac{q-2}{q}}N_1t^2}\leq 2e^{-ct^2N_1}\,.
\ee
\end{proof}

\begin{proof}[Proof of Proposition \ref{Prop:groundstate}]
The role of hidden variables at zero temperature is played by duality:
\be\label{eq:duality}
\sum_{\mu\in[N_2]}|(\xi^{(\mu)},\s)|^p= \left|\sup_{\t\in B_{N_2}^{(q)}} \sum_{\mu\in[N_2]}(\xi^{(\mu)},\s)\t_\mu\right|^p\,,\qquad p\geq 1\,. 
\ee
Therefore (here we shorten $\wh\xi:=\xi/\sqrt {N_1}$)
\bea
\inf_{\s\in \{-1,1\}^{N_1}} H^{(p)}(\s;\xi)&=&-\sup_{\s\in \frac{1}{\sqrt {N_1}}\{-1,1\}^{N_1}}\left|\frac{1}{N_1^{\max(0,\frac{q-2}{2q})}} \sup_{\t\in B_{N_2}^{(q)}} \sum_{\mu\in[N_2]}(\widehat\xi^{(\mu)},\s)\t_\mu\right|^p\nn\\
&=&- \left|\frac{1}{N_1^{\max(0,\frac{q-2}{2q})}}\sup_{\s\in \frac{1}{\sqrt {N_1}}\{-1,1\}^{N_1}}\sup_{\t\in B_{N_2}^{(q)}} \sum_{\mu\in[N_2]}(\widehat\xi^{(\mu)},\s)\t_\mu\right|^p\,\label{eq:bysymmetry}
\eea
by symmetry. 
It suffices to focus on the quantity inside the modulus above, which is dealt in Lemma \ref{lemma:Psup}. We have for all $t\gtrsim_p(1+\a)^{\frac1p}$
\be\label{eq:Psup-bis}
P\left(\frac{1}{N_1^{\max(0,\frac{q-2}{2q})}}\sup_{\s\in \frac{1}{\sqrt N}\{-1,1\}^{N_1}} \sup_{\t\in B_{N_2}^{(q)}} (\widehat\xi\s,\t)\geq t\right)\leq 2e^{-ct^2N_1}\,.
\ee
Combining (\ref{eq:bysymmetry}) with the bound above 
we obtain the statement. 
\end{proof}

Now we show that the patterns have energy of the same order in $\a$ of the global minimum, even though we can already observe a difference between the models with $p\geq 2$ and $p<2$. We deal with $\pm1$ binary patterns for simplicity, but a similar argument can be easily repeated for symmetric patterns with minor modifications. We have
$$
H^{(p)}(\xi^{(1)};\xi)=-\frac{\|\xi^{(1)}\|_2^p}{N_1^{1+p-p/p_+}}-\sum_{j=2}^{N_2}\frac{|(\xi^{(1)},\xi^{(j)})|^p}{N_1^{1+p-p/p_+}}\,. 
$$
This quantity concentrates around its average as $N_1$ grows:
\begin{lemma}\label{lemma:Hconcentra}
Take $t>0$ uniformly in $N_1$, small enough. It holds
\be\label{eq:Hconcentra}
P\left(\left|H^{(p)}(\xi^{(1)};\xi)-E[H^{(p)}(\xi^{(1)};\xi)]\right|\geq t\right)\leq  \begin{cases}
\exp\left(-cN_1 t^2\right)&p\in(1,2]\\
\exp\left(-cN_1 t^{\frac{2}{p}}\right)&p>2
\end{cases}
\ee
\end{lemma}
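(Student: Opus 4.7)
My plan is to isolate the random part of $H^{(p)}(\xi^{(1)};\xi)$ and apply a suitable Bernstein-type concentration inequality. Since $\xi^{(1)}\in\{-1,1\}^{N_1}$, the diagonal term $\|\xi^{(1)}\|_2^p/N_1^{\kappa(p)}$ is deterministic: it equals $N_1^{-1}$ for $p\in(1,2]$ and $N_1^{-p/2}$ for $p>2$, so it contributes nothing to either the centering or the fluctuations. Thus it suffices to prove the concentration for
\[
X\;:=\;\sum_{j=2}^{N_2}\frac{|(\xi^{(1)},\xi^{(j)})|^p}{N_1^{\kappa(p)}}.
\]
By the symmetry of the $\pm1$ patterns I may condition on $\xi^{(1)}$ (or replace it by the all-ones vector) so that $Y_j:=(\xi^{(1)},\xi^{(j)})$, $j=2,\dots,N_2$, are i.i.d., each equal in law to a sum of $N_1$ independent Rademachers. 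In particular $Y_j/\sqrt{N_1}$ is centred and sub-Gaussian with universal Orlicz norm bound $\|Y_j/\sqrt{N_1}\|_{\psi_2}\lesssim 1$.

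Next I rewrite $X$ in the two cases using $\kappa(p)=1+p/2$ for $p\leq 2$ and $\kappa(p)=p$ for $p>2$, together with $N_2=\a N_1^{p_+/2}$. One obtains
\[
X=\frac{1}{N_1}\sum_{j=2}^{\lfloor\a N_1\rfloor+1}|Y_j/\sqrt{N_1}|^p\quad (p\leq 2),\qquad X=\frac{1}{N_1^{p/2}}\sum_{j=2}^{\lfloor\a N_1^{p/2}\rfloor+1}|Y_j/\sqrt{N_1}|^p\quad (p>2),
\]
so in both cases $X$ is the normalised sum of $n\asymp N_2$ i.i.d. nonnegative random variables $W_j:=|Y_j/\sqrt{N_1}|^p$, with $\|W_j\|_{\psi_{2/p}}\lesssim 1$ by the elementary identity $\|Z^p\|_{\psi_{r/p}}=\|Z\|_{\psi_r}^p$. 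Centering by $E[W_j]$ exactly gives the mean appearing in the lemma statement.

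Now I invoke a standard Bernstein-type inequality for independent centred $\psi_{r}$-random variables (e.g.\ Vershynin, Thm.\ 3.1.1 and its sub-Weibull extension): if $V_1,\dots,V_n$ are i.i.d., centred, with $\|V_j\|_{\psi_r}\leq K$, then
\[
P\!\left(\left|\sum_{j=1}^{n} V_j\right|\geq s\right)\;\leq\;2\exp\!\left(-c\min\!\left(\frac{s^2}{nK^2},\bigl(s/K\bigr)^{\min(r,1)}\right)\right).
\]
For $p\in(1,2]$ one has $r=2/p\geq 1$, so the exponent in the second term is $1$ (sub-exponential regime); applying the inequality with $V_j=W_j-E[W_j]$, $n=\lfloor\a N_1\rfloor$ and $s=tN_1$ yields $P(|X-E[X]|\geq t)\leq 2\exp(-cN_1 t^2)$ for $t$ small enough (the Gaussian term wins once $t\leq\a K$). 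For $p>2$, $r=2/p<1$; applying the inequality with $s=tN_1^{p/2}$ and $n=\lfloor\a N_1^{p/2}\rfloor$ gives
\[
P(|X-E[X]|\geq t)\leq 2\exp\!\left(-c\min\!\bigl(t^2 N_1^{p/2}/\a,\ t^{2/p}N_1\bigr)\right),
\]
and for fixed small $t$ the second (Weibull) term dominates, producing $2\exp(-cN_1 t^{2/p})$.

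The only mildly delicate point is the sub-Weibull Bernstein inequality for $p>2$ and verifying that the crossover between its two regimes indeed puts us in the heavy-tail regime for small $t$; this is a routine comparison of exponents but is the step that forces the split of cases in the statement. Everything else is bookkeeping of the normalisation factors $\kappa(p)$ and $p_+$.
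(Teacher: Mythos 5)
Your proposal is correct and follows essentially the same route as the paper's: split off the deterministic diagonal term, replace $\xi^{(1)}$ by the all-ones vector, observe that the resulting summands are i.i.d.\ with $\psi_{2/p}$-norm of order one, and apply a Bernstein-type inequality for $\psi_{\ell}$ variables (the paper's Proposition~\ref{prop:Hoeffging-GEN} with $\ell=2/p$ plays exactly the role of your cited sub-Weibull Bernstein bound), then pick out the dominant branch of the $\min$ for fixed small $t$ and large $N_1$. The only cosmetic difference is that you invoke an external reference for the tail inequality, whereas the paper proves its own version in the appendix.
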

\begin{proof}
We write
$$
H^{(p)}(\xi^{(1)};\xi)=-\frac{\|\xi^{(1)}\|_2^p}{N_1^{1+p-p/p_+}}-\sum_{\mu=2}^{N_2}\frac{|(\xi^{(1)},\xi^{(\mu)})|^p}{N_1^{1+p-p/p_+}}\,.
$$
It suffices to focus on the second summand on the r.h.s. above. We have by independence
\bea
&&P\left(\left|\sum_{\mu=2}^{N_2}\frac{|(\xi^{(1)},\xi^{(\mu)})|^p}{N_1^{\frac p2}}-E\left[\frac{|(\xi^{(1)},\xi^{(\mu)})|^p}{N_1^{\frac p2}}\right]\right|\geq tN_1^{1+p/2-p/p_+}\right)\nn\\
&=&P\left(\left|\sum_{\mu=2}^{N_2}\frac{|(1,\xi^{(\mu)})|^p}{N_1^{\frac p2}}-E\left[\frac{|(1,\xi^{(\mu)})|^p}{N_1^{\frac p2}}\right]\right|\geq tN_1^{1+p/2-p/p_+}\right)\,,\label{eq:Mpestimato}
\eea
where $1$ is the constant vector with all entries equal to $1$. The random variables 
$$
T_p^{(\mu)}:=\frac{|(1,\xi^{(\mu)})|^p}{N_1^{\frac p2}}-E\left[\frac{|(1,\xi^{(\mu)})|^p}{N_1^{\frac p2}}\right]\,,\qquad \mu=2,\ldots, N_2
$$
are i.i.d. with $\|T_p^{(\mu)}\|_{\psi_{\frac 2p}}\simeq 1$. Thus by Proposition \ref{prop:Hoeffging-GEN} with $\ell=2/p$ we have
\be
\eqref{eq:Mpestimato}\leq 
\begin{cases}
\exp\left(-cN_1\min(t^2, t^{\frac{2}{p}})\right)&p\in(1,2]\\
\exp\left(-cN_1\min(t^2N_1^{p-2}, t^{\frac{2}{p}})\right)&p>2
\end{cases}
\ee
and the proof is complete. 
\end{proof}

Moreover we have
\be\label{eq:E-Hp-xi1}
|E[H^{(p)}(\xi^{(1)};\xi)]|\leq \frac{N_1^p+N_1^{\frac p2}N_2}{N_1^{1+p-p/p_+}}\simeq -\a\frac p2\Gamma\left(\frac p2\right)-\frac{1}{N_1^{1-p/p_+}} \,.
\ee
In fact
\bea
E[|(\xi^{(1)},\xi^{(\mu)})|^{p}]&=&E[\int d\l P(\xi^{(\mu)}\,:\,|(\xi^{(1)},\xi^{(\mu)})|\geq \l^{1/p})]\nn\\
&\leq& E[\|\xi^{(1)}\|_1^p]\int_0^\infty d\l e^{-\l^{\frac2p}}=\frac p2\Gamma\left(\frac p2\right) N_1^{\frac p2}\label{eq:proceeding}\,. 
\eea
Therefore
$$
H^{(p)}(\xi^{(1)};\xi)\simeq -\frac p2\Gamma\left(\frac p2\right)\a-\frac{1}{N_1^{1-p/p_+}}
$$
with very high probability. We see that if $p\geq 2$ this value is really of the same order of the ground state, while if $p\in(1,2)$ for $\a$ small and $N_1$ large the patterns have higher energy.


\section{Retrieval for $p\geq2$}\label{sect:retrieval}

In this section we prove Theorem \ref{TH1}. We look at all configurations reachable from $\xi^{(\mu)}$ by $\lfloor rN_1\rfloor$ flips and compare the energy of the pattern with the minimal energy of such configurations. By symmetry of the patterns we can reduce to look at $\mu=1$ and we may and will assume that $1$, \ie the vector with entries all equal to one, lies in $\widehat S^{N_1-1}_{1,\lfloor rN_1\rfloor}$.

Without further explanation, we introduce some more notations. 
For any point $\s\in\{-1,1\}^{N_1}$ and subset of indices $J\subseteq [N_1]$ we set
\be\label{defXY}
X_J^{(\mu)}(\s):=\frac{1}{\sqrt {N_1}}(\xi^{(\mu)}_J,\s_J)\,,\quad Y_J^{(\mu)}(\s):=\frac{1}{\sqrt {N_1}}(\xi^{(\mu)}_{J^c},\s_{J^c})\,
\ee
and $X_J^{(\mu)}(1)=:X_J^{(\mu)}$, $Y_J^{(\mu)}(1)=:Y_J^{(\mu)}$. 
We conveniently let
\be\label{eqF0}
\Phi_p(x,y):=|x+y|^p-|x-y|^p\,,\qquad \bar \Phi_p(r):=\Phi_p(r,1-r)=1-(1-2r)^p>0\,
\ee
(recall that we consider $r\in(0,1/2)$). 
We have the following useful representation (recall the definition of the {\sc flip} operator $F_J$ in Section \ref{sect:nota}). 

\begin{lemma}\label{lemma:repr-inutile}
Let $r\in(0,\frac12)$, $J\subset[N_1]$ with $|J|=\lfloor rN_1\rfloor$. It is
\be\label{eq:formulaF}
H^{(p)}(\xi^{(1)})-H^{(p)}(F_J\xi^{(1)})=-\frac{1}{N_1^{\frac {p_+-p}{2}}} \bar \Phi_p(r)-\frac{1}{N_1^{\frac {p_+}2}}\sum_{\mu=2}^{N_2} \Phi_p(X_J^{(\mu)}(\xi^{(1)}), Y_J^{(\mu)}(\xi^{(1)}))\,.
\ee
\end{lemma}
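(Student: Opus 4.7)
The claim is essentially an elementary computation; the only subtlety is keeping track of the two different powers of $N_1$ that arise from $\kappa(p)$ depending on whether $p\geq2$ or $p\leq2$.

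My plan is to start from the definition
$$
H^{(p)}(\s;\xi)-H^{(p)}(F_J\s;\xi)=-\frac{1}{N_1^{\k(p)}}\sum_{\mu\in[N_2]}\bigl(|(\xi^{(\mu)},\s)|^p-|(\xi^{(\mu)},F_J\s)|^p\bigr)\,,
$$
evaluated at $\s=\xi^{(1)}$, and split the sum into the term $\mu=1$ and the remaining $\mu\geq2$.

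For the $\mu=1$ term I would use that $(\xi^{(1)},\xi^{(1)})=N_1$ and, with $|J|=\lfloor rN_1\rfloor$, $(\xi^{(1)},F_J\xi^{(1)})=N_1-2|J|=N_1(1-2r)$ (after identifying $|J|/N_1$ with $r$ as in the statement). Hence
$$
-\frac{1}{N_1^{\k(p)}}\bigl(N_1^p-N_1^p(1-2r)^p\bigr)=-N_1^{p-\k(p)}\,\bar\Phi_p(r)\,,
$$
and since $p-\k(p)=\frac{p}{p_+}-1=-\frac{p_+-p}{2}$ in both regimes $p\geq2$ and $p\leq 2$, this yields the first summand in the claimed identity.

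For $\mu\geq2$ I would write
$$
(\xi^{(\mu)},\xi^{(1)})=\sqrt{N_1}\bigl(X_J^{(\mu)}(\xi^{(1)})+Y_J^{(\mu)}(\xi^{(1)})\bigr)\,,\qquad (\xi^{(\mu)},F_J\xi^{(1)})=\sqrt{N_1}\bigl(-X_J^{(\mu)}(\xi^{(1)})+Y_J^{(\mu)}(\xi^{(1)})\bigr)\,,
$$
using the definition of $X_J^{(\mu)},Y_J^{(\mu)}$ in (\ref{defXY}) and the fact that $F_J$ flips precisely the coordinates indexed by $J$. Since $|-x+y|^p=|x-y|^p$, the contribution of each $\mu\geq2$ to the energy difference equals
$$
-\frac{N_1^{p/2}}{N_1^{\k(p)}}\,\Phi_p\bigl(X_J^{(\mu)}(\xi^{(1)}),Y_J^{(\mu)}(\xi^{(1)})\bigr)\,,
$$
and the exponent simplifies via $\k(p)-\frac p2=\frac{p_+}{2}$ (again checking the two cases $p\geq2$ and $p\leq 2$ separately). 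Summing over $\mu=2,\ldots,N_2$ reproduces the second term of (\ref{eq:formulaF}).

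The only potential obstacle is purely notational: keeping the exponents $\k(p)$, $\frac{p_+-p}{2}$, $\frac{p_+}{2}$ consistent in the two regimes, which is resolved by a one-line case check. No probabilistic input is required — the identity is deterministic given the patterns, relying only on the definition of $H^{(p)}$, the bilinear splitting induced by $F_J$, and the homogeneity of $\Phi_p$.
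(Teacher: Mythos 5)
Your proof is correct and follows essentially the same route as the paper: split off the $\mu=1$ term, express the remaining terms via the $\Phi_p(X_J^{(\mu)},Y_J^{(\mu)})$ decomposition, and track the exponents using $\kappa(p)-\frac p2=\frac{p_+}{2}$ and $p-\kappa(p)=-\frac{p_+-p}{2}$. The paper phrases it slightly differently (it applies the $\Phi_p$-identity uniformly to all $\mu\in[N_2]$ and then evaluates $X_J^{(1)},Y_J^{(1)}$ explicitly as $r\sqrt{N_1},(1-r)\sqrt{N_1}$ rather than splitting the sum first), but the computation is the same.
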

\begin{proof}
Compute
\bea
H^{(p)}(\s)-H^{(p)}(F_J\s)&=&-\frac{1}{N_1^{\kappa(p)}}\sum_{\mu\in[N_2]}|(\xi^{(\mu)}_J,\s_J)+(\xi^{(\mu)}_{J^c},\s_{J^c})|^p - |-(\xi^{(\mu)}_J,\s_J)+(\xi^{(\mu)}_{J^c},\s_{J^c})|^p\nn\\
&=&-\frac{1}{N_1^{\kappa(p)-\frac p2}} \sum_{\mu\in[N_2]}\Phi_p(X_J^{(\mu)}(\s), Y_J^{(\mu)}(\s))\,,\nn
\eea
by the definitions (\ref{defXY}), (\ref{eqF0}). We have
$
\kappa(p)-\frac p2=1+\frac p2-\frac{p}{p_+}=\frac{p_+}{2}\,.
$
Take now $v=\xi^{(1)}$. An easy computation gives
\be
X_J^{(1)}(\xi^{(1)})=\frac{\|\xi^{(1)}_J\|_2^2}{\sqrt N_1}=\frac{|J|}{\sqrt N_1}=r\sqrt{N_1}\,,\quad Y_J^{(\mu)}(\xi^{(1)})=\frac{\|\xi^{(1)}_{J^c}\|_2^2}{\sqrt {N_1}}=\frac{|J^c|}{\sqrt {N_1}}=(1-r)\sqrt{N_1}\,. 
\ee
Thus 
$$
\frac{1}{N_1^{\frac {p_+}{2}}} \Phi_p(X_J^{(1)}(\xi^{(1)}), Y_J^{(1)}(\xi^{(1)}))= \frac{1}{N_1^{\frac {p_+-p}{2}}} \Phi_p(r, 1-r)
$$
and \eqref{eq:formulaF} follows.
\end{proof}

The necessary tail estimates in order to prove Theorem \ref{TH1} are given in the next lemmas. 

\begin{lemma}\label{lemma:XYsubG}
Let $r\in(0,\frac12)$, $J\subset[N_1]$ with $|J|=\lfloor rN_1\rfloor$.
$\{X_J^{(\mu)}\}_{\mu\in[N_2]\setminus \{1\}}$ and $\{Y_J^{(\mu)}\}_{\mu\in[N_2]\setminus \{1\}}$ are independent sub-Gaussian random variables, independent one from each other, with
\be\label{eq:XYsubG}
\|X_J^{(\mu)}\|_{\psi_2}\leq \sqrt{\frac{3r}{2}}\,\qquad \|Y_J^{(\mu)}\|_{\psi_2}\leq \sqrt{\frac{3(1-r)}{2}}\,.
\ee
Moreover $\{\Phi_p(X_J^{(\mu)}, Y_J^{(\mu)})\}_{\mu\in[N_2]}$ are i.i.d. $\psi_{2/p}$ r.vs with
\be\label{eq:disF}
\|\Phi_p(X_J^{(\mu)}, Y_J^{(\mu)})\|^{\frac 2p}_{\psi_{2/p}}\leq 3\sqrt{r(1-r)}\,.
\ee
\end{lemma}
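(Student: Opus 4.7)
\emph{Independence and $\psi_2$-norms.} Since $J\cap J^c=\emptyset$ and the pattern entries $\xi^{(\mu)}_i$ are i.i.d.\ Rademacher, the two blocks $(\xi^{(\mu)}_i)_{i\in J}$ and $(\xi^{(\mu)}_i)_{i\in J^c}$ are independent; hence $X^{(\mu)}_J$ (a function of the first block) and $Y^{(\mu)}_J$ (a function of the second) are independent. The independence across distinct $\mu$, as well as the joint independence of the two families, is immediate from the i.i.d.\ patterns. For the $\psi_2$-norms I would observe that $X^{(\mu)}_J=N_1^{-1/2}\sum_{i\in J}\xi^{(\mu)}_i$ is a normalised Rademacher sum of $\lfloor rN_1\rfloor$ terms; Hoeffding's MGF lemma $\cosh\theta\le e^{\theta^2/2}$ yields the Gaussian tail $P(|X^{(\mu)}_J|\ge t)\le 2e^{-t^2/(2r)}$. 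A direct computation of $E[\exp((X^{(\mu)}_J)^2/\lambda^2)]$ against this tail (equivalently, via the identity $e^{a^2}=E_g[e^{ag\sqrt{2}}]$ with $g\sim\mathcal{N}(0,1)$ and then $\cosh\le\exp((\cdot)^2/2)$ inside the product over $i\in J$) exhibits $\lambda=\sqrt{3r/2}$ as a valid value in the Orlicz definition. The estimate for $Y^{(\mu)}_J$ is the symmetric one with $r\leftrightarrow 1-r$.

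\emph{$\psi_{2/p}$-bound for $\Phi_p$.} The i.i.d.\ assertion for $\{\Phi_p(X^{(\mu)}_J,Y^{(\mu)}_J)\}_\mu$ is inherited from the previous paragraph. For the norm, I would start from the pointwise estimate, valid for $p\ge 2$,
\begin{equation*}
|\Phi_p(x,y)|\le 2p\,|xy|\bigl(|x|+|y|\bigr)^{p-2},
\end{equation*}
obtained by writing $\Phi_p=u^p-v^p$ with $u=|x|+|y|$ and $v=\bigl||x|-|y|\bigr|$ (so that $u^2-v^2=4|xy|$) and invoking the convexity of $t\mapsto(1-t)^{p/2}$ on $[0,1]$ when $p\ge2$, which forces $1-(1-t)^{p/2}\le (p/2)t$. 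Raising to the power $2/p$ and applying Young's inequality with conjugate exponents $(p/2,\,p/(p-2))$ decouples the two factors:
\begin{equation*}
|\Phi_p(X,Y)|^{2/p}\le (2p)^{2/p}\Bigl[\tfrac{2}{p}|XY|+\tfrac{p-2}{p}(|X|+|Y|)^2\Bigr].
\end{equation*}
Now exploit the independence of $X^{(\mu)}_J$ and $Y^{(\mu)}_J$: the first summand is sub-exponential with $\|XY\|_{\psi_1}\le \|X^{(\mu)}_J\|_{\psi_2}\|Y^{(\mu)}_J\|_{\psi_2}\le \tfrac{3}{2}\sqrt{r(1-r)}$ (via the elementary $|XY|\le(|X|^2/(2a))+(a|Y|^2/2)$ followed by factorisation of the exponential moments), while $\|(|X|+|Y|)^2\|_{\psi_1}\le (\|X^{(\mu)}_J\|_{\psi_2}+\|Y^{(\mu)}_J\|_{\psi_2})^2$ by the triangle inequality for the $\psi_2$-norm. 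A Cauchy-Schwarz on the joint exponential moment, followed by a tuned choice of $\lambda$ in the Orlicz definition, then delivers the claimed $\psi_{2/p}$-estimate.

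\emph{Main obstacle.} The technical crux is extracting the explicit numerical constant $3$ uniformly in $p\ge 2$ and in $r\in(0,1/2)$. This needs an optimised Young parameter in the second step and the sharp form of the Orlicz product inequality $\|XY\|_{\psi_1}\le \|X\|_{\psi_2}\|Y\|_{\psi_2}$ in the paper's convention. An illuminating alternative, particularly transparent for $p=2$ where $\Phi_2(x,y)=4xy$, is a direct Hanson--Wright estimate for the bilinear Rademacher form $XY=N_1^{-1}\sum_{i\in J,\,j\in J^c}\xi_i\xi_j$: the coupling matrix has Frobenius norm $\sqrt{|J||J^c|/2}$ and operator norm $\sqrt{|J||J^c|}/2$, which pin down a $\psi_1$-norm of order $\sqrt{r(1-r)}$ with the required constant.
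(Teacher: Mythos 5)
The first part of your argument (independence of $X^{(\mu)}_J$ and $Y^{(\mu)}_J$, and the $\psi_2$-norms via the Gaussian-linearisation trick $e^{a^2}=E_g[e^{a g\sqrt 2}]$ followed by $\cosh\le e^{(\cdot)^2/2}$) is essentially the paper's own computation, so that part is fine.

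For the $\psi_{2/p}$-bound on $\Phi_p$, your route diverges from the paper's and, as you yourself flag, is left open at the decisive step. You start from the sharper pointwise inequality $|\Phi_p(x,y)|\le 2p\,|xy|(|x|+|y|)^{p-2}$, which is correct, but this is precisely what makes the computation hard: after raising to the power $2/p$ and Young-decoupling you are left with a product term $|XY|$ whose $\psi_1$-norm must be tracked with sharp constants, plus a second term $(|X|+|Y|)^2$, and combining the two through "a Cauchy-Schwarz on the joint exponential moment, followed by a tuned choice of $\lambda$" is not carried out. You label the constant $3$ the "main obstacle" and the argument stops there, so as written this is a gap, not a proof. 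The Hanson--Wright remark only covers $p=2$ and is presented as heuristic.

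The paper's proof avoids the product term entirely by using the much cruder pointwise bound $|\Phi_p(x,y)|=\big||x+y|^p-|x-y|^p\big|\le(|x|+|y|)^p$, which is immediate since both $|x+y|$ and $|x-y|$ lie in $[0,|x|+|y|]$. Raising to the power $2/p$ then gives
\[
\Big(\tfrac{|\Phi_p(X,Y)|}{t}\Big)^{2/p}\le \frac{(|X|+|Y|)^2}{t^{2/p}},
\]
and the quantity $(|X|+|Y|)^2$ is controlled directly from the $\psi_2$-bounds on $X$ and $Y$ by a single split of the exponential moment, yielding $\|\Phi_p\|^{2/p}_{\psi_{2/p}}\le 2\|X\|_{\psi_2}\|Y\|_{\psi_2}$ and hence the constant $3\sqrt{r(1-r)}$ after substituting (\ref{eq:XYsubG}). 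The moral is that the sharpness of your pointwise estimate is wasted here: the $\psi_{2/p}$-norm only sees the growth rate, and the crude $(|x|+|y|)^p$ bound loses nothing at this level while keeping all the $\psi_2$ machinery for $X,Y$ directly usable. If you want to complete your version, you would need to replace the hand-waved Cauchy--Schwarz step with an explicit Hölder/convexity split in the Orlicz definition, which ends up reproducing the paper's computation with the simpler pointwise bound anyway.
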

\begin{proof}
The proof of \eqref{eq:XYsubG} is standard. We proceed only for $X^{(\mu)}$, as for $Y^{(\mu)}$ is similar.  We set $\tilde\l:=\l\sqrt{N_1/2\lfloor rN_1\rfloor}$ and $\widetilde X^{(\mu)}:=(\xi^{(\mu)}_J,1)/\sqrt{\lfloor rN_1\rfloor}$. We let also $g\sim\mathcal N(0,1)$ and $\bar\xi$ be a symmetric Bernoulli $\pm1$ variable, whose expectation values are denoted by $E_g$ and $E_{\bar \xi}$. We have
\bea
&&E[e^{\frac{|X_J^{(\mu)}|^2}{\l^2}}]=E[e^{\frac{|\widetilde X^{(\mu)}|^2}{\tilde\l^2}}]=EE_g[e^{\frac{g\widetilde X^{(\mu)}}{\tilde \l}}]=E_g\left[\left(E_{\bar\xi}[e^{\frac{g\bar\xi}{\tilde\l\sqrt{\lfloor rN_1\rfloor}}}]\right)^{\lfloor rN_1\rfloor}\right]\nn\\
&=&E_g\left[e^{\lfloor rN_1\rfloor\log\cosh\left(\frac{g}{\tilde\l\sqrt{\lfloor rN_1\rfloor}}\right)}\right]\leq E_g[e^{\frac{g^2}{2\tilde\l^2}}]=(1-\tilde\l^2)^{-\frac12}
\eea
Since
$$
(1-\tilde\l^2)^{-\frac12}=\left(1-\frac12\l^2\frac{N_1}{\lfloor rN_1\rfloor}\right)^{-\frac12}<2
$$
for $\l<\sqrt\frac{3r}{2}$ we recover the first one of (\ref{eq:XYsubG}). 

To prove \eqref{eq:disF} we bound
\be
E[e^{\left(\frac{\Phi_p}{t}\right)^{\frac2p}}]\leq E[e^{\frac{(|X_J^{(\mu)}|+|Y_J^{(\mu)}|)^{2}}{t^{\frac2p}}}]\leq \frac12 E[e^{\frac{2|X_J^{(\mu)}|^{2}}{t^{\frac2p}}}]+\frac12 E[e^{\frac{2|Y_J^{(\mu)}|^{2}}{t^{\frac2p}}}]\,,
\ee
whence
$$
\|\Phi_p(X_J^{(\mu)}, Y_J^{(\mu)})\|^{\frac 2p}_{\psi_{2/p}}\leq 2\|X_J^{(\mu)}\|_{\psi_2}\|Y_J^{(\mu)}\|_{\psi_2}\,. 
$$ 
\end{proof}

We shorten in the next statement $P_{\xi}(\cdot)=P(\cdot\,|\,\xi)$.

\begin{lemma}\label{lemma:crux1}
Let $r\in(0,\frac12)$, $p\geq 2$, $t=t(r):= 1-(1-2r)^p-r^\frac p2$. 
Take any $\s\in \widehat S^{N_1-1}_{1,\lfloor rN_1\rfloor}$. 
If 
\be\label{eq:conditalpha}
\a\geq 3^{1-p}N_1^{\frac{p-2}{2}}\left(\frac{r}{1-r}\right)^{\frac{p-1}{2}}\,,
\ee
then
\be\label{eq:stimap<2-2}
P_{\xi^{(1)}}\left(H^{(p)}(\xi^{(1)})-H^{(p)}(\s)\geq -t\right)\leq \exp\left(-\frac{N_1^{\frac p2}}{24\a }\left(\frac{r}{1-r}\right)^\frac p2\right)\,
\ee
and otherwise
\be\label{eq:stimap<2-1}
P_{\xi^{(1)}}\left(H^{(p)}(\xi^{(1)})-H^{(p)}(\s)\geq -t\right)\leq \exp\left(-\frac1{24} N_1 \sqrt{\frac{r}{1-r}}\right)\,.
\ee
\end{lemma}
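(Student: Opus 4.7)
My plan is to reduce the probability to a one-sided tail estimate for a sum of i.i.d.\ mean-zero $\psi_{2/p}$ random variables, and then apply a generalised Bernstein inequality in two different regimes according to whether the Gaussian part or the Weibull tail part dominates.

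\textbf{Step 1 (set-up via Lemma \ref{lemma:repr-inutile}).} Since $\s\in\widehat S^{N_1-1}_{1,\lfloor rN_1\rfloor}$, there exists $J\subset[N_1]$ with $|J|=\lfloor rN_1\rfloor$ so that $\s=F_J\xi^{(1)}$. For $p\geq2$ we have $p_+=p$, so Lemma \ref{lemma:repr-inutile} yields
\[
H^{(p)}(\xi^{(1)})-H^{(p)}(\s)=-\bar\Phi_p(r)-\frac1{N_1^{p/2}}\sum_{\mu=2}^{N_2}\Phi_p(X_J^{(\mu)},Y_J^{(\mu)}).
\]
Using $t=\bar\Phi_p(r)-r^{p/2}$, the event $\{H^{(p)}(\xi^{(1)})-H^{(p)}(\s)\geq -t\}$ is equivalent to
\[
\sum_{\mu=2}^{N_2}\Phi_p(X_J^{(\mu)},Y_J^{(\mu)})\leq -(rN_1)^{p/2}.
\]
Conditionally on $\xi^{(1)}$ the components of $\xi^{(\mu)}_J$ relative to $\xi^{(1)}_J$ are still i.i.d.\ Rademacher, so the distributional facts in Lemma \ref{lemma:XYsubG} apply: the summands $Z_\mu:=\Phi_p(X_J^{(\mu)},Y_J^{(\mu)})$ are i.i.d., and by symmetry of $X_J^{(\mu)}$ combined with $\Phi_p(-x,y)=-\Phi_p(x,y)$ they are mean-zero.

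\textbf{Step 2 (apply generalised Bernstein).} By Lemma \ref{lemma:XYsubG}, $\|Z_\mu\|_{\psi_{2/p}}^{2/p}\leq 3\sqrt{r(1-r)}$, i.e.\ $K:=\|Z_\mu\|_{\psi_{2/p}}\leq(3\sqrt{r(1-r)})^{p/2}$. Similarly one has a variance proxy $\sigma^2\lesssim(\sqrt{r(1-r)})^{p}$ (up to a numerical factor bounded by $3^p$), since $X+Y$ and $X-Y$ are sub-Gaussian with $\psi_2$-norm $\lesssim 1$ so that $E[\Phi_p^2]\leq 2E|X+Y|^{2p}+2E|X-Y|^{2p}$ is controlled. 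The relevant Bernstein/sub-Weibull inequality (this is the natural analogue of the estimate invoked in Lemma \ref{lemma:Hconcentra} through Proposition \ref{prop:Hoeffging-GEN}) gives
\[
P_{\xi^{(1)}}\!\left(\sum_{\mu=2}^{N_2}Z_\mu\leq -M\right)\leq\exp\!\left(-c\,\min\!\left(\frac{M^2}{N_2\sigma^2},\,\Bigl(\frac{M}{K}\Bigr)^{2/p}\right)\right),
\]
with $M=(rN_1)^{p/2}$ and $N_2=\a N_1^{p/2}$ by \eqref{eq:alpha}.

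\textbf{Step 3 (identify the regimes).} Substituting $M$, $N_2$, $\sigma^2$ and $K$:
\begin{align*}
\frac{M^2}{N_2\sigma^2}&\simeq\frac{(rN_1)^p}{\a N_1^{p/2}(r(1-r))^{p/2}}=\frac{N_1^{p/2}}{\a}\Bigl(\frac{r}{1-r}\Bigr)^{p/2},\\
\Bigl(\frac{M}{K}\Bigr)^{2/p}&\simeq\frac{rN_1}{\sqrt{r(1-r)}}=N_1\sqrt{\frac{r}{1-r}}.
\end{align*}
The first exponent is the smaller of the two precisely when
\[
\frac{N_1^{p/2}}{\a}\Bigl(\frac{r}{1-r}\Bigr)^{p/2}\;\lesssim\;N_1\sqrt{\frac{r}{1-r}},\quad\text{i.e.\ when}\quad\a\;\gtrsim\;N_1^{(p-2)/2}\Bigl(\frac{r}{1-r}\Bigr)^{(p-1)/2},
\]
which is the threshold condition \eqref{eq:conditalpha}. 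Picking the dominant regime in each case produces \eqref{eq:stimap<2-2} and \eqref{eq:stimap<2-1} respectively.

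\textbf{Main obstacle.} The only non-routine issue is to pin down the numerical constants $\tfrac1{24}$ and $3^{1-p}$. This requires either quoting a version of the generalised Hoeffding/Bernstein inequality with explicit constants (as the paper does elsewhere via Proposition \ref{prop:Hoeffging-GEN}), or carrying out a direct Chernoff-type computation of $E[e^{-\l Z_\mu}]$ with $\l$ optimised separately in each regime and using the explicit bound $\|Z_\mu\|_{\psi_{2/p}}^{2/p}\leq3\sqrt{r(1-r)}$ from Lemma \ref{lemma:XYsubG} to control the moments. The conceptual structure of the argument is entirely dictated by the two-regime Bernstein bound.
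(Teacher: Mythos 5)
Your proof is correct and follows exactly the paper's argument: reduce via Lemma \ref{lemma:repr-inutile} to a one-sided tail bound for the centred i.i.d.\ sum $\sum_{\mu\geq2}\Phi_p(X_J^{(\mu)},Y_J^{(\mu)})$, control its $\psi_{2/p}$-norm via Lemma \ref{lemma:XYsubG}, and apply the two-regime Bernstein bound of Proposition \ref{prop:Hoeffging-GEN} with $\ell=2/p$, the threshold \eqref{eq:conditalpha} being precisely where the Gaussian and Weibull terms in the minimum cross. The numerical constants you left implicit come directly from substituting the explicit bound \eqref{eq:disF} into \eqref{eq:Lpgauss} (with the prefactor $2$ absorbed by symmetry of the summands), which is what the paper does.
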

\begin{remark}\label{RMK}
Thinking $N_1$ very large, with an abuse of notation we will say in the sequel that a property occurs for all $\a>0$ in case it does for all $\a\gtrsim N_1^{-x}$ for some $x>0$. 
Therefore if $r>0$ uniformly in $N_1$, \ie we flip a number of bits proportional to $N_1$, we have for $p>2$ the tail (\ref{eq:stimap<2-1}) for all $\a>0$ and for $\a\lesssim \sqrt r$ for $p=2$. A sub-linear number of flips corresponds to take $r\simeq N_1^{-x}$ for some $x\in[0,1]$ (modulo log-corrections, see below). In this case we see that if $x<\frac{p-2}{p-1}$ the estimate \eqref{eq:stimap<2-1} still holds for any $\a>0$, while otherwise we have \eqref{eq:stimap<2-2}. 
\end{remark}

\begin{proof}
In Lemma \ref{lemma:tecnico} it is proven $t(r)>0$ for any $r\in(0,1/2)$.
It is clear that any $\s\in \widehat S^{N_1-1}_{1,\lfloor rN_1\rfloor}$ can be written as $F_J\xi^{(1)}$ for some index set $J$ of $\lfloor rN_1\rfloor$ elements (indeed $
J=\{i\in[N_1]\,:\,\s_i\neq \xi_i^{(1)}\}$).
Then by \eqref{eq:formulaF} we have 
\bea
&&P_{\xi^{(1)}}\left(H^{(p)}(\xi^{(1)})-H^{(p)}(\s)\geq -t\right)\nn\\&=&P_{\xi^{(1)}}\left(-\sum_{\mu=2}^{N_2} \Phi_p(X_J^{(\mu)}(\xi^{(1)}), Y_J^{(\mu)}(\xi^{(1)}))\geq N_1^{\frac {p}{2}} (\bar \Phi_p(r)-t)\right)\nn\\
&=&P_{\xi^{(1)}}\left(-\sum_{\mu=2}^{N_2} \Phi_p(X_J^{(\mu)}, Y_J^{(\mu)})\geq (N_1r)^{\frac {p}{2}}\right)\nn\\
&=&P\left(-\sum_{\mu=2}^{N_2} \Phi_p(X_J^{(\mu)}, Y_J^{(\mu)})\geq (N_1r)^{\frac {p}{2}}\right)\,,\label{eq;cont}
\eea
because of independence of the patterns and $\Phi_p(r,1-r)\geq0$. 

Note that $y>0$ is equivalent to $0\leq t<\bar \Phi_p(r)$. 
By Lemma \ref{lemma:XYsubG} $\{\Phi_p(X_J^{(\mu)}, Y_J^{(\mu)})\}_{\mu\in[N_2]}$
are centred i.i.d. r.vs which fit the assumptions of Proposition \ref{prop:Hoeffging-GEN} below (with $\ell=2/p\in(0,1]$).
Therefore
\be\label{eq:p>2}
\eqref{eq;cont}\leq \exp\left(-\frac1{24}\min\left(\frac{N^{p}_1r^\frac p2}{3^{p-1}N_2 (1-r)^\frac p2},\frac{N_1\sqrt r }{\sqrt{1-r}}\right)\right)\,.
\ee
The value of this minimum depends on $\a$. We take the first term if (\ref{eq:conditalpha}) is fulfilled,
otherwise we take the second one. 
\end{proof}

Now we are ready for the main proof. 

\begin{proof}[Proof of Theorem \ref{TH1}]
We shorten
\be\label{eq:defD}
\mathtt D_{\mu,N_1}(r_0):=\{\s\in \widehat B^{N_1}_{\mu,\lfloor r_0N_1 \rfloor}\,:\, H^{(p)}(\xi^{(\mu)};\xi)\geq H^{(p)}(\s;\xi)\}
\ee
and note that since for any $\mu\in[N_2]$
$$
\mathtt{dLM}_{N_1}^{(\mu)}\cap \widehat B^{N_1}_{\mu,\lfloor r_0N_1 \rfloor}\subseteq \mathtt D_{\mu,N_1}(r_0)\,,
$$
it is
\be\label{eq:1disth1}
P\left(\forall \mu\in[N_2]\,\,\,\,\mathtt{dLM}_{N_1}^{(\mu)}\cap \widehat B^{N_1}_{\mu,\lfloor r_0N_1 \rfloor}\subseteq \widehat B^{N_1}_{\mu,R}\right)\geq 
P\left(\forall \mu\in[N_2]\,\,\,\,\mathtt D_{\mu,N_1}(r_0)\subseteq \widehat B^{N_1}_{\mu,R}\right)\,.
\ee
Let us introduce the sets
\be
\Barr_{N_1,N_2,p}(n):=\left\{\min_{\mu\in[N_2]}\min_{\s\in\widehat S^{N_1-1}_{\mu,n}}H^{(p)}(\s)-H^{(p)}(\xi^{(\mu)})\geq t(n) \right\}\,
\ee
on which the minimal energy gap of $n$ flips from the patterns is a given $t(n)$. Write now for $r\in(0,\frac12)$ $n=\lfloor rN_1\rfloor$ and $t(n)=t(r)=t$. 
We take some $r_0\in(0,1/2)$ to be specified later.
Then, bearing in mind (\ref{eq:1disth1}), the crux is
\bea
&&P\left(\forall \mu\in[N_2]\,\,\,\,\mathtt D_{\mu,N_1}(r_0)\subseteq \widehat B^{N_1}_{\mu,R}\right)
\geq P\left(\bigcap_{n=\lfloor R\rfloor}^{\lfloor r_0N_1 \rfloor}\Barr_{N_1,N_2,p}(n)\right)\nn\\
&\geq&1-\sum_{n= \lfloor R\rfloor }^{\lfloor r_0N_1 \rfloor} P(\Barr^{c}_{N_1,N_2,p}(n))
\geq 1-N_1\min_{\lfloor R\rfloor\leq n\leq \lfloor r_0N_1 \rfloor} P(\Barr^{c}_{N_1,N_2,p}(n))\,. \label{eq:very-impo}
\eea
By the standard estimate (\ref{eq:standard-bound}) and the union bound we have
\bea
&&P\left(\Barr^c_{N_1,N_2,p}(n)\right)=P\left(\min_{\mu\in[N_2]}\min_{\s\in \widehat S^{N_1-1}_{\mu,n}}H^{(p)}(\s)-H^{(p)}(\xi_\mu)\leq t \right)\nn\\
&\leq& N_2\exp(N_1 
S(r))E\left[\sup_{\s\in \widehat S^{N_1-1}_{1,\lfloor rN_1\rfloor}}P_{\xi^{(1)}}\left(H^{(p)}(\xi^{(1)})-H^{(p)}(\s)\geq-t \right)\right]\,.\nn
\eea

The probabilities appearing in the last line are evaluated using Lemma \ref{lemma:crux1} with the same choice $t=1-(1-2r)^p-r^{\frac p2}$.

Let us first deal with $p>2$. We take $r_0\in(0,\frac12]$ such that for all $r\in[0,r_0]$ it is $25S(r)\leq \sqrt{r/(1-r)}$ and $t(r)=1-(1-2r)^p-r^{\frac p2}$ increases. Bearing in mind Remark \ref{RMK}, we let $x_p:=\frac{p-2}{p-1}$ and consider different regimes. If $n>\lfloor N_1^{1-x_p}\rfloor$ then 
Lemma \ref{lemma:crux1} yields for all $\a>0$
\be\label{eq:combinalo-o-o}
P\left(\Barr^c_{N_1,N_2,p}(n)\right)\leq N_2\exp\left(N_1 \left(
S(r)-\frac{\sqrt r}{24\sqrt{1-r}}\right)\right)\leq N_2e^{-c\sqrt{r}N_1}\simeq N_2 e^{-c\sqrt{nN_1}}\,.
\ee
Thus
\be\label{eq:pallaextp>2}
\min_{\lfloor N_1^{1-x_p}\rfloor < n\leq \lfloor r_0N_1 \rfloor} P(\Barr^{c}_{N_1,N_2,p}(n))\leq N_2 e^{-cN^{1-\frac{x_p}{2}}_1}\,.
\ee
For $n<\lfloor N_1^{1-x_p}\rfloor$ Lemma \ref{lemma:crux1} gives for all $\a>0$ 
\be\label{eq:u-u-u-usala}
P\left(\Barr^c_{N_1,N_2,p}(n)\right)\leq N_2e^{N_1
S(r)-\frac{N_1^{\frac p2}r^{\frac p2}}{24\a}}\leq N_2e^{n|\log N_1|-\frac{n^{\frac p2}}{24\a}}\,.
\ee
Thus for all $\e>0$ sufficiently small
\be\label{eq:pallaextp>2-2}
\min_{\lfloor N_1^{2\e}\rfloor<n<\lfloor N_1^{1-x_p}\rfloor} P(\Barr^{c}_{N_1,N_2,p}(n))\leq N_2e^{-cN_1^{\e p}}\,.
\ee
Moreover by \eqref{eq:u-u-u-usala} we see that also a poly-log number of flips is allowed:
\be\label{eq:pallaextp>2-3}
\min_{\lfloor (\log N_1)^{\frac2{p-2}}\rfloor<n\leq\lfloor N_1^{2\e}\rfloor} P(\Barr^{c}_{N_1,N_2,p}(n))\leq N_2e^{-c(\log N_1)^{1+\frac{2}{2-p}}}\,.
\ee
Finally we look at $n\simeq N_1^{1-x_p}$. In this case we have to fix some $\a_0>0$ and we use for $\a\leq \a_0$ the bound (\ref{eq:combinalo-o-o}) and for $\a>\a_0$ the bound (\ref{eq:u-u-u-usala}). We have
\be\label{eq:pallaextp>2-fin}
\min_{n\simeq\lfloor N_1^{1-x_p}\rfloor} P(\Barr^{c}_{N_1,N_2,p}(n))\leq N_2 e^{-cN^{1-\frac{x_p}{2}}_1}\,.
\ee

Combining (\ref{eq:very-impo}) with $R=(\log N_1)^{\frac2{p-2}}$ and (\ref{eq:pallaextp>2}), \eqref{eq:pallaextp>2-2}, \eqref{eq:pallaextp>2-3}, \eqref{eq:pallaextp>2-fin} we get
\be\label{eq:final-retr-p>2}
P\left(\forall \mu\in[N_2]\,\,\,\,\mathtt{dLM}_{N_1}^{(\mu)}\cap \widehat B^{N_1}_{\mu,\lfloor r_0N_1 \rfloor}\subseteq \widehat B^{N_1}_{\mu,\lfloor (\log N_1)^{\frac2{p-2}}\rfloor}\right)\geq 1-N_1N_2e^{-c(\log N_1)^{1+\frac{2}{2-p}}}
\ee
whence (\ref{eq:TH1-p>2}) follows. 

Now we look at $p=2$ and take $r_0=\frac 38$, so that $t(r)$ is increasing for $r\in(0,\frac38)$ (the number $3/8$ carries no special meaning). If $\a\leq\min\left(\frac13\sqrt \frac{r}{1-r},\frac{\sqrt r}{25S(r)}\right)$ then Lemma \ref{lemma:crux1} gives
\be\label{eq:bound0-retr-p=2}
P\left(B^c_{N_1,N_2,p}(n)\right)\leq N_2\exp\left(N_1 \left(
S(r)-\frac{\sqrt r}{24\a\sqrt{1-r}}\right)\right)\leq N_2 e^{-c\sqrt{nN_1}}\,.
\ee
Thus by (\ref{eq:very-impo}) with $R=\lfloor rN_1 \rfloor$ and $r\in(0,\frac38)$
\be\label{eq:caso.p=2}
P\left(\forall \mu\in[N_2]\,\,\,\,\mathtt{dLM}_{N_1}^{(\mu)}\cap \widehat B^{N_1}_{\mu,\lfloor r_0N_1 \rfloor}\subseteq \widehat B^{N_1}_{\mu,\lfloor rN_1 \rfloor}\right)\geq 1-N_1N_2e^{-\frac14\sqrt{r}N_1}\,,
\ee
whence the $p=2$ part of Theorem \ref{TH1} follows.
\end{proof}


\section{Absence of retrieval for $p\in(1,2]$}\label{sect:absence}
 
In this section we present the proof of Theorem \ref{TH2}.

We set for brevity for $\mu\in[N_2]\setminus \{1\}$, $p\in(1,2]$, $k\in[N_1]$, $J\subseteq[N_1]$, $\s\in\{-1,1\}^{N_1}$
\be\label{eq:W}
W_{p,k,J}^{(\mu)}(\s):=\frac{2p}{\sqrt{N_1}}\xi_k^{(\mu)}v_k\sign(Z_{J}^{(\mu)}(\s))|Z_{J}^{(\mu)}(\s)|^{p-1}
\ee
where (recall the definition of the {\sc flip} operator $F_J$ in section \ref{sect:nota})
\be\label{eq:defZ}
Z_J^{(\mu)}(\s):=\frac{1}{\sqrt{N_1}}(\xi^{(\mu)},F_J\s)\,.
\ee

Next we give the central technical lemma employed in the proof of Theorem \ref{TH2}. In the sequel we shorten $J+k:=J\cup\{k\}$ if $k\notin J$ and $J-k:=J\setminus\{k\}$ if $k\in J$.
\begin{lemma}\label{lemma:reprH-absence}
Let $r\in(0,\frac12]$, $J\subseteq[N_1]$ with $|J|=\lfloor rN_1\rfloor$. 
For any $p\in(1,2]$ we have
\bea
N_1(H^{(p)}(F_{J_{\pm k}}\xi^{(1)})-H^{(p)}(F_J\xi^{(1)}))&=&\mp \sum_{\mu=2}^{N_2} W_{p,k,J}^{(\mu)}(\xi^{(1)})-\a\varsigma N_1^{1-\frac p2} \nn\\
&\pm&\frac{2p(1-2r)^{p-1}}{N_1^{1-\frac p2}}+\OO{N_1^{2-\frac p2}}\label{eq:reprH-absence}
\eea
where $\varsigma$ is a strictly positive and uniformly bounded random variable depending on $\{\xi^{(1)}_k\xi^{(\mu)}_k, Z_{J}^{(\mu)}(\xi^{(1)})\}_{\mu=2...N_2}$. Setting 
\be\label{eq:defdp}
d(p):= 2^p\left(2p-1-2^{p-1}\left(\frac{p-1}{p}\right)^{p-1}\frac{3p-2}{p}\right)\,,
\ee
we have $C^p>\varsigma\geq d(p)$ for any realisation of $\varsigma$ and $C>0$ an absolute constant. 

In particular
\be\label{eq:reprH-absence-p=2}
\frac{N_1}{4}(H^{(2)}(F_{J_{\pm k}}\xi^{(1)})-H^{(2)}(F_J\xi^{(1)}))=\mp\sum_{\mu\geq2}\frac{\xi_k^{(1)}\xi_k^{(\mu)}}{\sqrt{N_1}}Z_J^{(\mu)}(\xi^{(1)})-\a\pm(1-2r)\mp\frac{1}{N_1}\,.
\ee
\end{lemma}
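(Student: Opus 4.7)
The plan is to Taylor-expand $x \mapsto |x|^p$ for a single-spin perturbation. Using $\kappa(p) - p/2 = 1$ for $p \in (1,2]$, one has
\begin{equation*}
N_1 H^{(p)}(F_J\xi^{(1)}) = -\frac{1}{N_1^{p/2}} \sum_{\mu \in [N_2]} |(\xi^{(\mu)}, F_J\xi^{(1)})|^p,
\end{equation*}
and the increment $(\xi^{(\mu)}, F_{J \pm k}\xi^{(1)}) - (\xi^{(\mu)}, F_J\xi^{(1)}) = \mp 2\xi_k^{(\mu)}\xi_k^{(1)}$ follows by direct inspection of the flip operator $F$ (a single bit at position $k$ is either turned on or off). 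Apply Taylor's formula
\begin{equation*}
|A+B|^p - |A|^p = pB\,\sign(A)|A|^{p-1} + R(A,B),
\end{equation*}
observing that $R(A,B) \geq 0$ by convexity of $|x|^p$ for $p \geq 1$.

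Next, split the sum over $\mu$ into the pattern term $\mu = 1$ and the noise terms $\mu \geq 2$. For $\mu = 1$, the inner product $(\xi^{(1)}, F_J\xi^{(1)}) = N_1(1-2r) + O(1)$ is of order $N_1$, so the Taylor expansion is uniformly convergent; the first-order piece produces exactly $\pm 2p(1-2r)^{p-1}/N_1^{1-p/2}$ after the $1/N_1^{p/2}$ normalization, while the second-order piece (and higher) contributes the $O(1/N_1^{2-p/2})$ error. For $\mu \geq 2$, the first-order contributions collect into $\mp \sum_{\mu \geq 2} W^{(\mu)}_{p,k,J}(\xi^{(1)})$ by the definition \eqref{eq:W} once one uses $|(\xi^{(\mu)}, F_J\xi^{(1)})|^{p-1} = N_1^{(p-1)/2}|Z_J^{(\mu)}(\xi^{(1)})|^{p-1}$ and the identity $\sign((\xi^{(\mu)}, F_J\xi^{(1)})) = \sign(Z_J^{(\mu)}(\xi^{(1)}))$. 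The sum of the remaining nonnegative Taylor remainders is by definition $\alpha \varsigma N_1^{1-p/2}$; this declares $\varsigma$ as a random variable depending only on $\{\xi_k^{(1)}\xi_k^{(\mu)}, Z_J^{(\mu)}(\xi^{(1)})\}_{\mu \geq 2}$, and positivity $\varsigma > 0$ is immediate from $R \geq 0$.

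The bounds $d(p) \leq \varsigma \leq C^p$ are established by partitioning each contribution to $\varsigma$ according to the size of $|(\xi^{(\mu)}, F_J\xi^{(1)})|$ relative to $|B| = 2$. The upper bound uses $R(A,B) \leq c_p |B|^p$ valid uniformly (a convexity consequence of the increment formula). The lower bound $\varsigma \geq d(p)$ is the delicate point: the combinatorial form of $d(p) = 2^p(2p - 1 - 2^{p-1}((p-1)/p)^{p-1}(3p-2)/p)$ points to a pointwise minimization of $R(A,B)$ where the critical ratio $|B/A|$ is related to $(p-1)/p$, producing the factor $((p-1)/p)^{p-1}$. The special case $p = 2$ follows directly: the Taylor expansion is exact, $R(A,B) = B^2 = 4$ identically, so $\varsigma \equiv 4 = d(2)$, and after dividing by $4$ the $O(1/N_1^{2-p/2})$ error becomes the explicit $\mp 1/N_1$ appearing in \eqref{eq:reprH-absence-p=2}. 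The main obstacle is the uniform lower bound on $\varsigma$, which requires the sharp pointwise convexity-based estimate alluded to above.
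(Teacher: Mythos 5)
Your overall roadmap is the same as the paper's: Taylor-expand $|x|^p$ around the unperturbed overlap, separate the $\mu=1$ pattern term (giving the explicit $\pm 2p(1-2r)^{p-1}/N_1^{1-p/2}$) from the noise terms $\mu\geq 2$ (whose linear parts give $\mp\sum W$), and collect the Taylor remainders into $\alpha\varsigma N_1^{1-p/2}$. Your observation that the remainder $R(A,B)=|A+B|^p-|A|^p-pB\sign(A)|A|^{p-1}\geq 0$ by convexity is clean and correct, and cleanly replaces the paper's sign-chasing through the coefficients $(p)_\ell$.

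The genuine gap is the lower bound $\varsigma\geq d(p)$, which you acknowledge as "the delicate point" but do not prove, and your suggested route is not actually viable. You write that "the combinatorial form of $d(p)$ points to a pointwise minimization of $R(A,B)$," but the pointwise infimum of $R(A,B)$ over $A$ (at fixed $|B|=2/\sqrt{N_1}$) is zero — for $p\in(1,2)$, $R(A,B)\approx \tfrac{p(p-1)}{2}B^2|A|^{p-2}\to 0$ as $|A|\to\infty$ — so there is no uniform-in-$A$ pointwise lower bound on each remainder that could yield a positive $d(p)$. The paper's argument instead partitions into three regimes governed by the size of $|Z_J^{(\mu)}|$ relative to the perturbation: $|Z|\geq 2N_1^{-1/2}$, where the binomial series about $Z$ converges; $2aN_1^{-1/2}\leq|Z|<2N_1^{-1/2}$ with $a=2(p-1)/p$; and $|Z|<2aN_1^{-1/2}$, where the expansion is performed about the perturbation rather than $Z$ (your proposal does not even flag that the series about $Z$ fails to converge when $|Z|<|B|$). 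The explicit form of $d(p)$, including the factor $((p-1)/p)^{p-1}$, comes from the lower bounds obtained in the latter two regimes together with the universal remainder bounds, not from minimizing $R$ in $A$. Without carrying out this partitioned computation, the lemma's quantitative conclusion $C^p>\varsigma\geq d(p)$ is unestablished and the proof is incomplete.

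A smaller point: in the $p=2$ special case you write "$R(A,B)=B^2=4$ identically." In fact $B^2=4/N_1$, and after summing over $\mu\geq 2$ one gets $4(N_2-1)/N_1$, i.e., $\varsigma=4-4/(\alpha N_1)$ rather than $\varsigma\equiv 4$; the stray $-4/N_1$ is exactly what, after dividing by $4$, combines with the $\mu=1$ contribution to produce the explicit $\mp 1/N_1$ in \eqref{eq:reprH-absence-p=2}. This is worth tracking carefully since the whole point of the $p=2$ identity is that everything is exact.
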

\begin{proof}
By Lemma \ref{lemma:repr-inutile} we have for any $k\notin J$
\bea
N_1(H^{(p)}(F_{J_{+k}}\xi^{(1)})-H^{(p)}(F_J\xi^{(1)}))&=&N_1(H(F_{J_{+ k}}\xi^{(1)})-H(\xi^{(1)})-(H(F_J\xi^{(1)})-H(\xi^{(1)})))\nn\\
&=&N_1^{\frac p2}\left(\bar \Phi_p\left(r+\frac1{N_1}\right)-\bar \Phi_p(r)\right)\nn\\
&+&\sum_{\mu=2}^{N_2} \Phi_p(X_{J_{+k}}^{(\mu)}(\xi^{(1)}), Y_{J_{+k}}^{(\mu)}(\xi^{(1)}))\nn\\
&-&\sum_{\mu=2}^{N_2} \Phi_p(X_{J}^{(\mu)}(\xi^{(1)}), Y_{J}^{(\mu)}(\xi^{(1)}))\nn\\
&=&N_1^{\frac p2}\left(|1-2r|^p-\left|1-2r-\frac2{N_1}\right|^p\right)\nn\\
&+&\sum_{\mu=2}^{N_2}\left( |Z_J^{(\mu)}(\xi^{(1)})|^p-\left|Z_{J}^{(\mu)}(\xi^{(1)})-2\frac{\xi_k^{(\mu)}\xi_k^{(1)}}{\sqrt{N_1}}\right|^p \right)\,.\label{eq:repr-H+}
\eea
Similarly for all $k\in J$
\bea
N_1(H^{(p)}(F_{J_{-k}}\xi^{(1)})-H^{(p)}(F_J\xi^{(1)}))&=&N_1^{\frac p2}\left(|1-2r|^p-\left|1-2r+\frac2{N_1}\right|^p\right)\nn\\
\!\!\!\!\!&+&\!\!\!\!\!\sum_{\mu=2}^{N_2}\left( |Z_J^{(\mu)}(\xi^{(1)})|^p-\left|Z_{J}^{(\mu)}(\xi^{(1)})+2\frac{\xi_k^{(\mu)}\xi_k^{(1)}}{\sqrt{N_1}}\right|^p\right)\label{eq:repr-H-}
\eea
For $p=2$ a straightforward computation gives (\ref{eq:reprH-absence-p=2}) from (\ref{eq:repr-H+}) and (\ref{eq:repr-H-}).

In general for $p\in(1,2)$ we have to use Taylor expansion. Let $(p)_0:=1$ and $(p)_k:=\prod_{j=0}^{k-1}(p-j)$ for $k\geq1$. Assuming $r\in(0,\frac12)$, $N_1$ large enough (\ie $N_1(1-2r)>2$) we have
\be
\left|1-2r\pm\frac2{N_1}\right|^p=|1-2r|^p\pm\frac{2p}{N_1}|1-2r|^{p-1}+\frac{4}{N_1^2}\sum_{k\geq2}\frac{(p)_k}{k!}\frac{2^{k-2}(1-2r)^{p-k}}{N_1^{k-2}}
\ee
and using $|(p)_k|\leq k!$ we get
\be
\left|\frac{4}{N_1^2}\sum_{k\geq2}\frac{(p)_k}{k!}\frac{2^{k-2}(1-2r)^{p-k}}{N_1^{k-2}}\right| \leq \frac{4}{N_1^2(1-2r)^{2-p}}\sum_{k\geq0}\frac{2^k}{((1-2r)N_1)^{k}}\lesssim\frac{1}{N_1^2(1-2r)^{2-p}}\,.
\ee
Therefore
\be\label{eq:bound-pezzo1}
N_1^{\frac p2}\left(|1-2r|^p-\left|1-2r\pm\frac2{N_1}\right|^p\right)=\mp\frac{2p}{N_1^{1-\frac p2}}|1-2r|^{p-1}+ \OO{N_1^{2-\frac p2}}\,. 
\ee
On the other hand, for $r=\frac12$ this correction term is trivially of order $N_1^{-\frac p2}$. 

Recall now \eqref{eq:W} and compute
\bea
&&\left|Z_{J}^{(\mu)}(\xi^{(1)})\pm2\frac{\xi_k^{(\mu)}\xi_k^{(1)}}{\sqrt{N_1}}\right|^p\nn\\
&=&1_{\{|Z_{J}^{(\mu)}(\xi^{(1)})|\geq 2N_1^{-\frac12}\}}\Big( |Z_{J}^{(\mu)}(\xi^{(1)})|^p\pm W_{p,k,J}^{(\mu)}(\xi^{(1)}) \nn\\
&+& \sum_{\ell\geq 2}\frac{(p)_\ell}{\ell!}\frac{|Z_{J}^{(\mu)}(\xi^{(1)})|^p}{(Z_{J}^{(\mu)}(\xi^{(1)}))^\ell} \frac{(\pm 2\xi^{(\mu)}_k\xi^{(1)}_k)^\ell}{N_1^{\frac \ell2}} \Big)\nn\\
&+&1_{\{|Z_{J}^{(\mu)}(\xi^{(1)})|< 2N_1^{-\frac12}\}}\frac{2^p}{N_1^{\frac p2}}\Big(1\pm \frac p2\xi_k^{(\mu)}\xi_k^{(1)}Z_{J}^{(\mu)}(\xi^{(1)})\sqrt{N_1}\nn\\
&+&\sum_{\ell\geq 2}\frac{(p)_\ell}{\ell!} \left(\pm\frac{\xi_k^{(\mu)}\xi_k^{(1)}}{2}Z_{J}^{(\mu)}(\xi^{(1)})\sqrt{N_1}\right)^\ell\Big)\nn\,. 
\eea
Thus
\bea
&&\left( |Z_J^{(\mu)}(\xi^{(1)})|^p-\left|Z_{J}^{(\mu)}(\xi^{(1)})\pm2\frac{\xi_k^{(\mu)}\xi_k^{(1)}}{\sqrt{N_1}}\right|^p\right)=\mp W_{p,k,J}^{(\mu)}(\xi^{(1)})\nn\\
&-&1_{\{|Z_{J}^{(\mu)}(\xi^{(1)})|\geq 2N_1^{-\frac12}\}}\sum_{\ell\geq 2}\frac{(p)_\ell}{\ell!}\frac{|Z_{J}^{(\mu)}(\xi^{(1)})|^p}{(Z_{J}^{(\mu)}(\xi^{(1)}))^\ell} \frac{(\pm2\xi^{(\mu)}_k\xi^{(1)}_k)^\ell}{N_1^{\frac \ell2}}\label{eq:riga2}\\
&-&1_{\{|Z_{J}^{(\mu)}(\xi^{(1)})|< 2N_1^{-\frac12}\}}\left(\frac{2^p}{N_1^\frac p2}\pm\frac{2^p}{N_1^\frac p2}\frac{\xi^{(\mu)}_k\xi^{(1)}_k}{2}Z_{J}^{(\mu)}(\xi^{(1)})\sqrt{N_1}- |Z_{J}^{(\mu)}(\xi^{(1)})|^p\mp W_{p,k,J}^{(\mu)}(\xi^{(1)})\right)\nn\\\label{eq:riga1}\\
&-&1_{\{|Z_{J}^{(\mu)}(\xi^{(1)})|< 2N_1^{-\frac12}\}}\frac{2^p}{N_1^{\frac p2}}\sum_{\ell\geq 2}\frac{(p)_\ell}{\ell!} 2^{-\ell}(\pm Z_{J}^{(\mu)}(\xi^{(1)})\xi_k^{(\mu)}\xi_k^{(1)}\sqrt{N_1})^\ell\,.\label{eq:riga3}
\eea

The contributions (\ref{eq:riga2}) and (\ref{eq:riga3}) are very similar and will be dealt together. 
Using that for $\ell\geq2$ it is $\ell-p>0$ and $\ell^2(p)_\ell\leq 2p^2\ell!$ we have
\be\label{eq:bound-riga2}
|\eqref{eq:riga2}|\leq \frac{2p^2}{N_1^{\frac p2}}\sum_{\ell\geq2} \ell^{-2}\,.
\ee
Similarly
\be\label{eq:bound-riga3}
|\eqref{eq:riga3}|\leq \frac{2^{p+1}p^2}{N_1^{\frac p2}}\sum_{\ell\geq2} \ell^{-2}\,.
\ee

Furthermore, depending on the value of $\sign(Z_{J}^{(\mu)}(\xi^{(1)}))\xi^{(1)}_k\xi^{(\mu)}_k$ we can have either
\bea
&&\eqref{eq:riga2}=1_{\{|Z_{J}^{(\mu)}(\xi^{(1)})|\geq 2N_1^{-\frac12}\}}\frac{1}{N_1^{\frac p2}}\sum_{\ell\geq 2}\frac{(p)_\ell}{\ell!}\frac{|Z_{J}^{(\mu)}(\xi^{(1)})|^p}{|Z_{J}^{(\mu)}(\xi^{(1)})|^\ell} \frac{2^\ell}{N_1^{\frac \ell2}}\nn\\
&=&1_{\{|Z_{J}^{(\mu)}(\xi^{(1)})|\geq 2N_1^{-\frac12}\}}\frac{|Z_{J}^{(\mu)}(\xi^{(1)})|^p}{N_1^{\frac p2}}\left[(1+2|Z_{J}^{(\mu)}(\xi^{(1)})\sqrt N_1|^{-1})^p-(1+2p|Z_{J}^{(\mu)}(\xi^{(1)})\sqrt N_1|^{-1})\right]\geq0\,,\nn\\ 
\eea
where equality is achieved only if $p=1$, or
\bea
&&\eqref{eq:riga2}=1_{\{|Z_{J}^{(\mu)}(\xi^{(1)})|\geq 2N_1^{-\frac12}\}}\frac{1}{N_1^{\frac p2}}\sum_{\ell\geq 2}(-1)^\ell\frac{(p)_\ell}{\ell!}\frac{|Z_{J}^{(\mu)}(\xi^{(1)})|^p}{|Z_{J}^{(\mu)}(\xi^{(1)})|^\ell} \frac{2^\ell}{N_1^{\frac \ell2}}\nn\\
&=&1_{\{|Z_{J}^{(\mu)}(\xi^{(1)})|\geq 2N_1^{-\frac12}\}}\frac{|Z_{J}^{(\mu)}(\xi^{(1)})|^p}{N_1^{\frac p2}}\sum_{\ell\geq 1}\left(\frac{\sqrt{N_1}Z_{J}^{(\mu)}(\xi^{(1)})}{2}\right)^{-2\ell}\left(\frac{(p)_{2\ell}}{{2\ell}!}-\frac{(p)_{2\ell+1}}{{2\ell+1}!}\left|\frac{\sqrt{N_1}Z_{J}^{(\mu)}(\xi^{(1)})}{2}\right|^{-1}\right)\nn\\\label{eq:quant1}
\eea
where we split the sum over even and odd $\ell\geq2$ and rename the indices to get the second identity. The quantity in \eqref{eq:quant1} above is non-negative, since for $\ell\geq1$ $(p)_{2\ell}\geq0$ and $(p)_{2\ell+1}\leq0$, which can be shown by observing that $(p)_{\ell\geq3}=(-1)^{\ell}p(p-1)\prod_{j=2}^{\ell-1}(j-p)$ (again the equality is achieved only for $p=1$). 
Similarly we have
\bea
&&\eqref{eq:riga3}=1_{\{|Z_{J}^{(\mu)}(\xi^{(1)})|< 2N_1^{-\frac12}\}}\frac{2^p}{N_1^{\frac p2}}\sum_{\ell\geq 2}\frac{(p)_\ell}{\ell!} 2^{-\ell}| Z_{J}^{(\mu)}(\xi^{(1)})\sqrt{N_1}|^\ell\nn\\
&=&1_{\{|Z_{J}^{(\mu)}(\xi^{(1)})|< 2N_1^{-\frac12}\}}\frac{2^p}{N_1^{\frac p2}}\left[(1+p|2^{-1}Z_{J}^{(\mu)}(\xi^{(1)})\sqrt N_1|)^p-(1+|2^{-1}Z_{J}^{(\mu)}(\xi^{(1)})\sqrt N_1|)\right]\geq0\nn\\\label{eq:quant3/2}\, 
\eea
(equality is achieved only if $p=1$) or
\bea
&&\eqref{eq:riga3}=1_{\{|Z_{J}^{(\mu)}(\xi^{(1)})|< 2N_1^{-\frac12}\}}\frac{2^p}{N_1^{\frac p2}}\sum_{\ell\geq 2}(-1)^\ell\frac{(p)_\ell}{\ell!}2^{-\ell}| Z_{J}^{(\mu)}(\xi^{(1)})\sqrt{N_1}|^\ell\nn\\
&=&1_{\{|Z_{J}^{(\mu)}(\xi^{(1)})|< 2N_1^{-\frac12}\}}\frac{2^p}{N_1^{\frac p2}}\sum_{\ell\geq 1}\left(\frac{\sqrt{N_1}Z_{J}^{(\mu)}(\xi^{(1)})}{2}\right)^{2\ell}\left(\frac{(p)_{2\ell}}{{2\ell}!}-\frac{(p)_{2\ell+1}}{{2\ell+1}!}\frac{|\sqrt{N_1}Z_{J}^{(\mu)}(\xi^{(1)})|}{2}\right)\geq0\,\nn\\\label{eq:quant2}
\eea
by the same argument used for the \eqref{eq:quant1}.

We conclude 
$$
\eqref{eq:riga2}\leq0,\quad \eqref{eq:riga3}\leq0\quad\mbox{for $p\in(1,2)$}\,. 
$$ 

Moreover for any $a\in(0,1)$
\bea
\eqref{eq:quant3/2}1_{\{|Z_{J}^{(\mu)}(\xi^{(1)})|\geq 2aN_1^{-\frac12}\}}&\geq& \frac{(2a)^p}{aN_1^{\frac p2}}\label{eq:>a+}\\
\eqref{eq:quant2}1_{\{|Z_{J}^{(\mu)}(\xi^{(1)})|\geq2aN_1^{-\frac12}\}}&\geq& \frac{2^p}{N_1^{\frac p2}}a^2\left(\frac{(p)_2}{2}+a\frac{|(p)_3|}{6}\right)\label{eq:>a-}\,.
\eea

With a bit of algebra we rewrite the term in (\ref{eq:riga1}) as
\bea
&&1_{\{|Z_{J}^{(\mu)}(\xi^{(1)})|< 2N_1^{-\frac12}\}}\frac{2^p}{N_1^\frac{p}{2}}\left(\left(1-\frac{|\sqrt{N_1}Z_{J}^{(\mu)}(\xi^{(1)})|^p}{2^{p}}\right)\right.\nn\\
&\mp&\left. p\xi^{(\mu)}_k\xi^{(1)}_k\sign(Z_{J}^{(\mu)}(\xi^{(1)}))\left(\frac{|\sqrt{N_1}Z_{J}^{(\mu)}(\xi^{(1)})|^{p-1}}{2^{p-1}}-\frac{|\sqrt{N_1}Z_{J}^{(\mu)}(\xi^{(1)})|}{2}\right)\right)\,.\label{eq:bitofalgebra}
\eea
According to the value of $\xi^{(\mu)}_k\xi^{(1)}_k\sign(Z_{J}^{(\mu)}(\xi^{(1)}))$ the term inside the parenthesis can be either
$$
1-|x|^p+p(|x|^{p-1}-|x|)\quad\mbox{ or }\quad 1-|x|^p-p(|x|^{p-1}-|x|)\,,
$$
where we shortened $|x|:=\frac{|2Z_{J}^{(\mu)}(\xi^{(1)})|}{N_1^{\frac 12}}<1$. The first expression above is clearly positive, while the second one is positive thanks to Lemma \ref{lemma:tecnico}. More precisely for any $a\in(0,1)$
\be\label{eq:a3}
\eqref{eq:bitofalgebra}1_{\{|Z_{J}^{(\mu)}(\xi^{(1)})|< 2aN_1^{-\frac12}\}}\geq 1-a^p-p(a^{p-1}-a)>0\,. 
\ee
From the representation (\ref{eq:bitofalgebra}) we also get the bound
\be\label{eq:bound-algebra}
\eqref{eq:riga1}\leq \frac{C}{N_1^\frac p2}\,. 
\ee

Now we pick $a=2(p-1)/p$ into (\ref{eq:>a+}), \eqref{eq:>a-}, \eqref{eq:a3}. Combining with \eqref{eq:bound-riga2}, \eqref{eq:bound-riga3}, \eqref{eq:bound-algebra} we conclude that the lines (\ref{eq:riga2}), (\ref{eq:riga1}) and (\ref{eq:riga3}) define a random variable $\varsigma:=\varsigma(\{\xi^{(1)}_k\xi^{(\mu)}_k, Z_{J}^{(\mu)}(\xi^{(1)})\}_{\mu=2...N_2})$ lying in a uniformly bounded interval away from the origin
such that
\be\label{eq:bound-pezzo2}
\sum_{\mu=2}^{N_2}\left( |Z_J^{(\mu)}(\xi^{(1)})|^p-\left|Z_{J}^{(\mu)}(\xi^{(1)})\pm2\frac{\xi_k^{(\mu)}\xi_k^{(1)}}{\sqrt{N_1}}\right|^p\right)=\mp \sum_{\mu=2}^{N_2} W_{p,k,J}^{(\mu)}(\xi^{(1)})+ \a\varsigma N_1^{1-\frac p2}\,. 
\ee
Precisely, we have
$$
\varsigma\geq 2^p\left(2p-1-2^{p-1}\left(\frac{p-1}{p}\right)^{p-1}\frac{3p-2}{p}\right)\,.
$$ 
This and (\ref{eq:bound-pezzo1}) give (\ref{eq:reprH-absence}).
\end{proof}

Now we turn to the proof of Theorem \ref{TH2}, which we conveniently split in severals steps.

{\em Step 1: reduction.}
Due to the exchangeability of the patterns and their entries we have
\bea
&&P\left(\mathtt{LM}^{(\mu)}_{N_1}\cap \widehat B^{N_1}_{\mu, \lfloor r N_1\rfloor}\neq \emptyset\right)\nn\\
\!\!\!\!&\leq& \!\!\!\!N_2 \sum_{\ell=1}^{\lfloor rN_1\rfloor} \sum_{\substack{J\subset [N_1]\\|J|=\ell}} P\left(\bigcap_{k\notin J}\{H(F_{J_{+k}}\xi^{(1)})-H(F_{J}\xi^{(1)})>0\}\,,\,\,\bigcap_{k\in J}\{H(F_{J-k}\xi^{(1)})-H(F_{J}\xi^{(1)})>0\}\right)\,\nn\\
\!\!\!\!&=&\!\!\!\!\!\!N_2\sum_{\ell=1}^{\lfloor rN_1\rfloor}\binom{N_1}{\ell}P\left(\bigcap_{k>\ell}\{H(F_{[\ell]_{+k}}\xi^{(1)})-H(F_{[\ell]}\xi^{(1)})>0\}\,,\,\,\bigcap_{k\leq\ell}\{H(F_{[\ell]-k}\xi^{(1)})-H(F_{[\ell]}\xi^{(1)})>0\}\right)\nn\,.\\\label{eq:proof1}
\eea
Recalling (\ref{eq:W}) and (\ref{eq:defZ}) we set for brevity
\bea
W_{p,k,\ell}^{(\mu)}(\xi^{(1)})&:=&W_{p,k,[\ell]}^{(\mu)}(\xi^{(1)})\,,
\quad W_{p,k,\ell}^{(\mu)}:=W_{p,k,\ell}^{(\mu)}(1)\,,\\
M^{(\mu)}&:=&\frac1{N_1}\sum_{i\in[N_1]}\xi_i^{(\mu)}\,,\quad Q^{(\mu)}:=\sign(M^{(\mu)})|M^{(\mu)}|^{p-1}\,.
\eea
By Lemma \ref{lemma:reprH-absence} we have for $N_1$ large enough ($r':=\ell/N_1$)
\bea
\eqref{eq:proof1}&\leq&N_2\sum_{\ell=1}^{\lfloor rN_1\rfloor}\binom{N_1}{\ell}P\left(\forall k>\ell\,\,\,-\sum_{\mu=2}^{N_2} W_{p,k,\ell}^{(\mu)}(\xi^{(1)})\geq d(p)\a N_1^{1-\frac p2} -\frac{2p(1-2r')^{p-1}}{N_1^{1-\frac p2}}\,,\,\right.\nn\\
&&\left.\forall k\in[\ell] \,\,\,\sum_{\mu=2}^{N_2} W_{p,k,\ell}^{(\mu)}(\xi^{(1)})\geq d(p)\a N_1^{1-\frac p2}+\frac{2p(1-2r')^{p-1}}{N_1^{1-\frac p2}} \right)\nn\\
&=&N_2\sum_{\ell=1}^{\lfloor rN_1\rfloor}\binom{N_1}{\ell}P\left(\forall k>\ell\,\,\,-\sum_{\mu=2}^{N_2} W_{p,k,\ell}^{(\mu)}\geq d(p)\a N_1^{1-\frac p2} -\frac{2p(1-2r')^{p-1}}{N_1^{1-\frac p2}}\,,\,\right.\nn\\
&&\left.\forall k\in[\ell] \,\,\,\sum_{\mu=2}^{N_2} W_{p,k,\ell}^{(\mu)}\geq d(p)\a N_1^{1-\frac p2}+\frac{2p(1-2r')^{p-1}}{N_1^{1-\frac p2}} \right)\nn\\
&=&N_2\sum_{\ell=1}^{\lfloor rN_1\rfloor}\binom{N_1}{\ell}P\left(\forall k>\ell\,\,\,(Q,\tilde\xi^{(k)})\geq d(p)\a N_1^{1-\frac p2} -\frac{2p(1-2r')^{p-1}}{N_1^{1-\frac p2}}\,,\,\right.\nn\\
&&\left.\forall k\in[\ell] \,\,\,(Q,\tilde\xi^{(k)})\geq d(p)\a N_1^{1-\frac p2}+\frac{2p(1-2r')^{p-1}}{N_1^{1-\frac p2}} \right)
\eea
(recall that $\tilde\xi^{(k)}$ denotes the $k$-th transposed pattern). In the second identity above we have exploited independence of the pattern $\xi^{(1)}$ to replace $W_{p,k,\ell}^{(\mu)}(\xi^{(1)})$ by $W_{p,k,\ell}^{(\mu)}$ and in the third one the independence of the first $\ell$ entries from all the others and the flip-symmetry to replace $Z_{\ell}^{(\mu)}$ by $M^{(\mu)}$. We also used that the independent random variables $Q^{(2)}\,\ldots Q^{(N_2)}$ are symmetric. We denote by $Q$ (respectively $M$) the vector whose $\mu$-th component is $Q^{(\mu)}$ (respectively $M^{(\mu)}$). 

{\em Step 2: disentangling by the FKG inequality.}
Let $\mathcal M$ be the $\s$-field generated by $M^{(2)},\ldots, M^{(N_2)}$ and notice that $Q^{(2)}\,\ldots Q^{(N_2)}$ are $\mathcal M$-measurable. We shorten $P_{\mathcal M}(\cdot):=P(\cdot\,|\,\mathcal M)$.  
The crucial observation here (first remarked in \cite{lou} for the Hopfield model) is that for each $\mu\in[N_2]$ the law of $\xi^{(\mu)}$ conditionally on  $\mathcal M$ is a permutation distribution (as the increments of a simple random walk given the position). 
Therefore the FKG inequality applies (see for instance \cite{brics}) and we have
\bea
&&\!\!\!\!\!\!\!\!\!\!\!\!\!\!\!\!\!\!P_{\mathcal M}\left(\forall k>\ell\,\,\,(Q,\tilde\xi^{(k)})\geq d(p)\a N_1^{1-\frac p2} -\frac{2p(1-2r')^{p-1}}{N_1^{1-\frac p2}}\right.\,,\nn\\
&&\left.\,\forall k\in[\ell] \,\,\,(Q,\tilde\xi^{(k)})\geq d(p)\a N_1^{1-\frac p2}+\frac{2p(1-2r')^{p-1}}{N_1^{1-\frac p2}} \right)\nn\\
&\leq&\prod_{k>\ell}P_{\mathcal M}\left((Q,\tilde\xi^{(k)})\geq d(p)\a N_1^{1-\frac p2} -\frac{2p(1-2r')^{p-1}}{N_1^{1-\frac p2}}\right)\nn\\
&&\prod_{k\in[\ell]}P_{\mathcal M}\left((Q,\tilde\xi^{(k)})\geq d(p)\a N_1^{1-\frac p2} +\frac{2p(1-2r')^{p-1}}{N_1^{1-\frac p2}}\right)\,.\label{eq:continua}
\eea
Note that $E[\xi_k^{(\mu)}\,|\,\mathcal M]=M^{(\mu)}$ and $\Var[\xi_k^{(\mu)}\,|\,\mathcal M]=1-(M^{(\mu)})^2$.
Therefore introducing $\overline Q\in\R^{N_2}$ with components $\overline Q^{(\mu)}:=Q^{(\mu)}\sqrt{1-(M^{(\mu)})^2}$, we get
\be\label{eq:Hoeff}
P_{\mathcal M}\left((Q,\tilde\xi^{(k)})\geq t\right)\leq \exp\left( -\frac{(t-\|M\|_p^p)^2}{2\|\overline Q\|_2^2} \right)\,,
\ee
by the Hoeffding inequality. Note that this quantity is independent on $k$ and also we have the simple bound $\|\overline Q\|_2^2\leq \| Q\|_2^2=\|M\|^{2p-2}_{2p-2}$. Then (note $d(2)=1$)
\bea
&&\eqref{eq:continua}_{p\in(1,2)}\leq \exp\left( -N_1\frac{(\a d(p)N_1^{1-\frac p2}-\|M\|_p^p)^2}{2\|M\|^{2p-2}_{2p-2}} \right)\,,\label{eq:rhs1}\\
&&\eqref{eq:continua}_{p=2}\leq \exp\left( -\frac{N_1}{2\|M\|^{2}_{2}}\left(r'(\a-(1-2r')-\|M\|_2^2)^2+(1-r')(\a+(1-2r')-\|M\|_2^2)^2\right) \right)\,.\nn\\\label{eq:rhs2} 
\eea
{\em Step 3: concentration.}
Now we have to take the global expectations of the r.h.s. above. We notice $E[\|M\|_p^p]\simeq\a N_1^{1-\frac p2}$ (this is an identity for $p=2$) and  $\|(M^{(\mu)})^p-E[(M^{(\mu)})^p]\|_{\psi_{\frac2p}}\simeq N_1^{-\frac p2}$. 
We can give precise upper bounds for these quantities. It holds for any $p>0$
\be\label{eq:bounde(p)}
E[\|M\|_p^p]\leq \frac{\a N_1}{N_1^{\frac p2}}\int e^{-\frac{x^{\frac 2p}}{2}}dx=:\a N_1^{1-\frac p2} e(p)\,
\ee
(to prove it, proceed as in the computation giving \eqref{eq:proceeding}) and
\be
\|(M^{(\mu)})^p-E[(M^{(\mu)})^p]\|_{\psi_{\frac2p}}\leq \left(\frac{3}{2N_1}\right)^\frac p2
\ee
(to prove it, proceed as in the proof of Lemma \ref{lemma:XYsubG}). 

Hence by Proposition \ref{prop:Hoeffging-GEN} in Appendix \ref{Sect:Tail} (with $\ell=2/p$)
\be\label{eq:conc-Mp}
P\left(\left|\sum_{\mu\geq2} |M^{(\mu)}|^p-E[|M^{(\mu)}|^p]\right|\geq t \right)\leq 2\exp\left(-\frac18\min\left(\frac{2^pt^2N_1^{p-1}}{3^p\a},\frac{2t^{\frac2p}N_1^{2-\frac2p}}{3\a^{\frac 2p-1}}\right)\right)\,. 
\ee
We will also use the following sub-Gaussian estimate, which follows from \cite[Corollary 2.8]{talasup} (there the constant was not specified, but our choice is however not the optimal one). For any $p\in(1,2)$ there is a number $h>0$ such that for any $t< 2\a h N_1^{2-p}$

\be\label{eq:conc-M(p-1)}
P\left(\left|\sum_{\mu\geq2} |M^{(\mu)}|^{2p-2}-E[|M^{(\mu)}|^{2p-2}]\right|\geq t \right)\leq 2\exp\left( -\frac{t^2 }{4\a h N_1^{3-2p}}\right)\,. 
\ee

A sketch of the proof of \eqref{eq:conc-M(p-1)} is given at the end of this section. Note that for $p=2$ (\ref{eq:conc-M(p-1)}) reduces to the standard Gaussian estimate in the Bernstein inequality \eqref{eq:conc-Mp}$|_{p=2}$ (however numerical constants may change a bit). 

{\em Step 4: finalising the argument for $p\in(1,2)$.}
Using (\ref{eq:conc-Mp}) with $t= \frac12\a N_1^{1-\frac p2}|d(p)-e(p)|=:\t_p$ we obtain
\bea
E[\mbox{r.h.s. of \eqref{eq:rhs1}}]&\leq &E\left[\mbox{r.h.s. of \eqref{eq:rhs1}}1_{\{|\|M\|_p^p-E[\|M\|_p^p]|\leq \t_p\}}\right]+P\left(|\|M\|_p^p-E[\|M\|_p^p]|\geq \t_p\right)\nn\\
&\leq &E\left[\exp\left( -\frac{\a^2N_1^{3-p}(d(p)-e(p))^2}{\|M\|^{2p-2}_{2p-2}} \right)\right]\nn\\
&+&2\exp\left(-\frac18\min\left(\frac{2^p\t_p^2N_1^{p-1}}{3^p\a},\frac{2\t_p^{\frac2p}N_1^{2-\frac2p}}{3\a^{\frac 2p-1}}\right)\right)\nn\\
&=&E\left[\exp\left( -\frac{\a^2N_1^{3-p}(d(p)-e(p))^2}{\|M\|^{2p-2}_{2p-2}} \right)\right]+2e^{-\a N_1 \kappa_1(p)}
\label{eq:final-bound1}\,,
\eea
where
\be\label{eq:kappa1}
\kappa_1(p):=\frac18\min\left(\frac14 \left(\frac{2}{3}\right)^p (d(p)-e(p))^2,\frac{2}{3}2^{-\frac2p}(d(p)-e(p))^{\frac 2p} \right)\,. 
\ee

Using \eqref{eq:conc-M(p-1)} with $t=\frac12E[\|M\|_{2p-2}^{2p-2}]$ and \eqref{eq:bounde(p)} (with $p\to 2p-2$) we obtain
\bea
&&E\left[\exp\left( -\frac{\a^2N_1^{3-p}(d(p)-e(p))^2}{\|M\|^{2p-2}_{2p-2}} \right)\right]\nn\\
&\leq& E\left[\exp\left( -\frac{2\a^2N_1^{3-p}(d(p)-e(p))^2}{3\a N^{2-p}e(2p-2)} \right)\right]+2\exp\left( -\frac{\a N_1}{16 h}\right)\nn\\
&=&2\exp\left( -\frac{2\a N_1(d(p)-e(p))^2}{2e(2p-2)}\right)+2\exp\left( -\frac{\a N_1}{16 h}\right)\,.
\eea

Combining the display above with (\ref{eq:proof1}), (\ref{eq:continua}), (\ref{eq:rhs1}), (\ref{eq:final-bound1}) and using (\ref{eq:standard-bound}) we have 
\bea
P\left(\mathtt{LM}^{(\mu)}_{N_1}\cap \widehat B^{N_1}_{\mu, \lfloor r N_1\rfloor}\neq\emptyset\right)&\leq& 2N_2\sum_{\ell=1}^{\lfloor rN_1\rfloor}\binom{N_1}{\ell}\left(e^{-\a N_1 \kappa_1(p)}+2e^{ -\a N_1\frac{2(d(p)-e(p))^2}{2e(2p-2)}}+e^{ -\frac{\a N_1}{16 h}}\right)\nn\\
&\leq& 6r N_2N_1 e^{N_1\left(S(r)-\a\min\left( \kappa_1(p), \frac{2(d(p)-e(p))^2}{2e(2p-2)},\frac{1}{16 h}\right)\right)}\,, 
\eea
which yields the assertion for $p\in(1,2)$.

{\em Step 5: finalising the argument for $p=2$.} Using (\ref{eq:conc-Mp})$_{p=2}$ with $t=(1+\a)(1-2r')=:\t$ in the first line of the display below and (\ref{eq:conc-M(p-1)})$_{p=2}$ with $t=\frac12E[\|M\|_{2}^{2}]$ in the third line we have
\bea
E[\mbox{r.h.s. of \eqref{eq:rhs2}}]&\leq &E\left[\exp\left( -\frac {N_1}2\frac{((1-2r')-\t)^2}{\|M\|^{2}_{2}} \right)\right]+P\left(|\|M\|_2^2-E[\|M\|_2^2]|\geq \t\right)\nn\\
&\leq&E\left[\exp\left( -\frac {N_1}2\frac{\a^2(1-2r')^2}{\|M\|^{2}_{2}} \right)\right]+
2e^{-\frac23N_1(1+\a)(1-2r')\min\left(1,\frac{2(1+\a)(1-2r')}{3\a}\right)}\nn\\
&\leq &e^{-\frac13\a N_1(1-2r')^2}+2e^{-\frac13\a N_1}+2e^{-\frac23 N_1(1+\a)(1-2r')\min\left(1,\frac{2(1+\a)(1-2r')}{3\a}\right)}\nn\\
&\leq &e^{-\frac13\a N_1(1-2r)^2}+2e^{-\frac13\a N_1}+2e^{-\frac23 N_1(1+\a)(1-2r)\min\left(1,\frac{2(1+\a)(1-2r)}{3\a}\right)}\label{eq:final-bound2}\,,
\eea
as $r'$ ranges from $1/N_1$ to $r$. 
So combining (\ref{eq:proof1}), (\ref{eq:continua}), (\ref{eq:rhs1}), (\ref{eq:final-bound2}) and using again (\ref{eq:standard-bound}) we obtain
\bea
P\left(\mathtt{LM}^{(\mu)}_{N_1}\cap \widehat B^{N_1}_{\mu, \lfloor r N_1\rfloor}\neq\emptyset\right)&\leq& N_2\sum_{\ell=1}^{\lfloor rN_1\rfloor}\binom{N_1}{\ell}\left(\mbox{terms in }\eqref{eq:final-bound2}\right)\nn\\
&\leq& 2r N_2N_1 \left(e^{N_1(S(r)-\frac13\a(1-2r)^2)}+e^{N_1(S(r)-\frac13\a)}\right.\nn\\
&+&\left.e^{N_1\left(S(r)-\frac23(1+\a)(1-2r)\min\left(1,\frac{2(1+\a)(1-2r)}{3\a}\right)\right)}\right)\,.\nn
\eea
The first two summands are negative if $\a>3S(r)/(1-2r)^2$. The fact that also the third one is so is verified in Lemma \ref{lemma:verify}, Appendix \ref{Sect:AppTH2}. The proof is complete. 

\begin{proof}[Proof of \eqref{eq:conc-M(p-1)} (sketch)]
If $p<2$ it is $2p-2<2$. We bound
\bea
E[e^{\left|\frac{1}{\sqrt N_1}\sum_{i\in[N_1]}\xi_i\right|^{2p-2}}]&\leq&2\sum_{n\geq 1}E[1_{\{n-1\leq \frac{1}{\sqrt N_1}\sum_{i\in[N_1]}\xi_i\leq n\} }e^{\left|\frac{1}{\sqrt N_1}\sum_{i\in[N_1]}\xi_i\right|^{2p-2}}]\nn\\
&\leq &2\sum_{n\geq 1} e^{n^{2p-2}-\frac{(n-1)^2}{4}}=:h<\infty\,.
\eea

We can write
$$
|M^{(\mu)}|^{2p-2}=\frac{1}{N_1^{p-1}}\left(\frac{1}{\sqrt N_1}\sum_{i\in[N_1]}\xi^{(\mu)}_i\right)^{2p-2}\,.
$$
If $s\in(0,1)$ we have (see for instance \cite[Lemma 2.6]{talasup})
\be
E[e^{s |M^{(\mu)}|^{2p-2}}]\leq e^{s^2 h}\,. 
\ee
Thus by the Markov inequality and optimisation over $s$
\be
P\left(N_1^{p-1}\left|\sum_{\mu\geq2} |M^{(\mu)}|^{2p-2}-E[|M^{(\mu)}|^{2p-2}]\right|\geq t \right)\leq e^{s^2\a N_1 h-s t}\leq e^{-\frac{t^2}{4\a h N_1}}\,. 
\ee
provided $t\leq 2h\a N_1$. Changing variables implies the assertion. 
\end{proof}


\appendix

\section{Tail estimates}\label{Sect:Tail}
Here we present tail estimates for sums of i.i.d. r.vs used in the main text. The following statement is not new and we give the proof here mainly for the reader's convenience. In fact the proof of the subsequent formula (\ref{eq:Lpgauss}) for $\ell\in[1,2]$ is classical and can be found for instance in \cite[Corollaries 2.9, 2.10]{talasup} (though the formulation is slightly different there). So we focus on the case $\ell\in(0,1)$. For similar statements, see \cite[Proposition 3.2]{newman} and \cite[Theorem 6.21]{L-T}. 

\begin{proposition}\label{prop:Hoeffging-GEN}
Let $\ell\in(0,2]$, $X_1,\dots,X_N$ i.i.d. r.vs. 
Then for $N$ large enough
\be\label{eq:Lpgauss}
P\left(\left|\sum_{i\in[N]}X_i\right|\geq t\right)\leq 2
\exp\left(-\frac18\min\left(\frac{t^2}{\|X_1\|^2_{\psi_\ell}N},\frac{t^{\ell}}{\|X_1\|^\ell_{\psi_\ell}N^{\max(\ell-1,0)}}\right)\right)\,.
\ee
\end{proposition}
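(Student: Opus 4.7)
The case $\ell\in[1,2]$ is the classical Bernstein--Orlicz inequality, so I would only sketch it and defer to \cite{talasup}. Writing $K:=\|X_1\|_{\psi_\ell}$, the idea is to control the moment generating function of a centred $\psi_\ell$ variable by $E[e^{sX_1}]\leq \exp(C(|s|K)^q)$ for all $s$, where $q$ is the H\"older conjugate of $\ell$. By independence $E[e^{s\sum_i X_i}]\leq \exp(C(|s|K)^q N)$, and Chernoff's bound optimised over $s$ produces the two regimes in \eqref{eq:Lpgauss}: the interior optimum in the quadratic (sub-Gaussian) range $|s|\lesssim 1/K$ gives the term $t^2/(K^2 N)$, while the optimum in the range $|s|\gtrsim 1/K$ produces the $\psi_\ell$-tail term $t^\ell/(K^\ell N^{\ell-1})$.

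The interesting case is $\ell\in(0,1)$, for which $\psi_\ell$ is not convex near the origin and the MGF approach breaks down (the MGF is infinite for every $s\neq 0$). My plan is to combine truncation with Bernstein's inequality. Fix $M>0$ to be chosen later and split
\be
P\left(\Big|\sum_{i\in[N]} X_i\Big|\geq t\right)\leq P\left(\max_{i\in[N]}|X_i|>M\right)+P\left(\Big|\sum_{i\in[N]} X_i 1_{\{|X_i|\leq M\}}\Big|\geq t\right).\nn
\ee
By the defining inequality of $\|\cdot\|_{\psi_\ell}$, Markov's inequality and the union bound, the first summand is at most $2Ne^{-(M/K)^\ell}$.

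For the second summand I would pass to the centred variables $Y_i:=X_i 1_{\{|X_i|\leq M\}}-E[X_i 1_{\{|X_i|\leq M\}}]$, assuming (as in every application in the paper) that $X_i$ is already centred. A direct integration of the Orlicz tail shows that the resulting mean shift $N\,|E[X_i 1_{\{|X_i|>M\}}]|$ is exponentially small in $(M/K)^\ell$ and will be absorbed by the target bound. Since $|Y_i|\leq 2M$ and $\Var(Y_i)\leq E[X_1^2]\lesssim K^2$ (all polynomial moments of a $\psi_\ell$ variable being controlled by $K$), Bernstein's inequality gives
\be
P\left(\Big|\sum_i Y_i\Big|\geq t/2\right)\leq 2\exp\left(-c\min\Big(\frac{t^2}{K^2 N},\,\frac{t}{M}\Big)\right).\nn
\ee

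It remains to choose $M$ to recover the advertised $\min(t^2/(K^2 N),\,t^\ell/K^\ell)$. My choice would be $M\asymp K\max\bigl((\log N)^{1/\ell},\,(t/K)^{1-\ell}\bigr)$: the second factor makes the Bernstein linear term $t/M$ of order $(t/K)^\ell$, while the logarithmic factor ensures $2Ne^{-(M/K)^\ell}$ absorbs the prefactor $N$. The main technical point is to verify that this single choice of $M$ produces the desired estimate simultaneously in the sub-Gaussian regime $t\lesssim K N^{1/(2-\ell)}$ (where the quadratic Bernstein term dominates) and in the heavy-tail regime $t\gtrsim K N^{1/(2-\ell)}$ (where the max bound and the linear Bernstein term match each other); the qualifier ``for $N$ large enough'' in the statement absorbs the mild logarithmic slack coming from the union bound.
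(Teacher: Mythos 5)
Your plan for $\ell\in(0,1)$ has the same skeleton as the paper's (truncate, control the truncated sum, bound the maximum by a union bound), but your choice of truncation level $M$ makes the two pieces incompatible, and fixing it requires the paper's key idea, which your argument is missing. Write $K:=\|X_1\|_{\psi_\ell}$. You set $M\asymp K\max((\log N)^{1/\ell},(t/K)^{1-\ell})$ so that the Bernstein sub-exponential branch $t/M$ is of order $(t/K)^\ell$. But then the union bound for the maximum is $2Ne^{-(M/K)^\ell}\asymp Ne^{-c(t/K)^{\ell(1-\ell)}}$, and since $\ell(1-\ell)<\ell$ this does \emph{not} decay like the target $e^{-\frac18(t/K)^\ell}$. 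To see the failure concretely, take $t\asymp KN^{1/(2-\ell)}$, which is exactly where the two branches of the stated minimum meet, both equal to $N^{\ell/(2-\ell)}$; your max term is $Ne^{-cN^{\ell(1-\ell)/(2-\ell)}}$, whose exponent is of strictly smaller polynomial order in $N$, so for $N$ large it overwhelms the claimed bound. If you instead enlarge $M$ to kill the max term (say $M\asymp KN^{1/(2-\ell)}$), the Bernstein sub-exponential branch $t/M$ collapses (it is $O(1)$ at the crossover): vanilla Bernstein only sees the a.s.\ bound $|Y_i|\leq 2M$ and its Laplace-transform estimate is sub-Gaussian only for $|\mu|\lesssim 1/M$.

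The paper avoids this tension with a sharper ingredient. It fixes a single $t$-independent truncation level $s=KN^{1/(2-\ell)}$ (so the max term $e^{-s^\ell/(4K^\ell)}=e^{-N^{\ell/(2-\ell)}/4}$ is always small enough), and then replaces off-the-shelf Bernstein by a hand-built MGF bound for the truncated variable, $E[e^{\mu X_i^s}]\leq e^{2\mu^2 K^2}$ valid for all $|\mu|\leq\bar\mu:=(4s^{1-\ell}K^\ell)^{-1}$. The crucial point is that $\bar\mu s=s^\ell/(4K^\ell)=N^{\ell/(2-\ell)}/4\gg 1$: the estimate exploits the $\psi_\ell$ tail of the truncated variable (via $x^2\leq e^{|x|^\ell/A}$ inside the Taylor expansion of $e^{\mu X_i^s}$) rather than its a.s.\ bound, and so stays sub-Gaussian for $\mu$ far beyond the Bernstein range $1/s$. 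The constrained Chernoff bound over $0\leq\mu\leq\bar\mu$ then produces exactly the two branches of \eqref{eq:Lpgauss} with crossover at $t=s$. Two side remarks: your re-centering after truncation is actually more careful than the paper's, whose expansion $1+\mu^2(\cdots)$ tacitly uses $E[X_i^s]=0$ (exact only for symmetric $X_i$); and your sketch of $\ell\in[1,2]$ is fine in spirit (though the bound $E[e^{sX_1}]\leq e^{C(|s|K)^q}$ cannot hold for all $s$ when $\ell=1$, $q=\infty$), but the $\ell\in(0,1)$ case needs the $t$-free truncation and the stronger MGF estimate.
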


\begin{proof}[Proof (only for $\ell\in(0,1)$)]

We have by assumption
\be\label{eq:stimaX1}
P\left(|X_1|\geq t\right)
\leq e^{-\frac{t^\ell}{2\|X_1\|^\ell_{\psi_\ell}}}\,. 
\ee
Let now $s:=\|X_1\|_{\psi_\ell}N^{\frac{1}{2-\ell}}$ and set
\be
X^s_i:= X_i1_{\{|X_i|<s\}}\,. 
\ee
Then we have
\bea
P\left(\left|\sum_{i\in[N]}X_i\right|\geq t\right)&\leq&P\left(\sum_{i\in[N]}X_i\geq t\,,\quad\, \sup_{i\in[N]}|X_i|<s\right)+P\left(\sup_{i\in[N]}|X_i|\geq s\right)\nn\\
&\leq&P\left(\left|\sum_{i\in[N]}X^s_i\right|\geq t\right)+e^{-\frac{s^{\ell}}{4\|X_1\|^\ell_{\psi_\ell}}}\label{eq:dec-P}\,.
\eea

Set now 
$$
\bar\mu:=\frac{1}{4 s^{1-\ell}\|X_1\|^\ell_{\psi_\ell}}\,.
$$
We note that for any $0\leq\mu\leq\bar\mu$ (and $N$ large enough) it is
$$
\mu X^s_i\leq \frac{|X^s_i|^\ell}{A\|X_i\|^\ell_{\psi_\ell}}\,.
$$
Using the bound $x^2\leq e^{\frac{|x|^\ell}{A}}$
we compute
\bea
E[e^{\mu X^s_i}]&=&1+\mu^2\|X_i^s\|^2_{\psi_\ell}\sum_{n\geq0}\mu^n\frac{E[(X^s_i)^{n+2}]}{\|X_i\|^2_{\psi_\ell}(n+2)!}\nn\\
&\leq&1+\mu^2\|X_i\|^2_{\psi_\ell} E\left[e^{\frac{|X^s_i|^\ell}{10\|X_i\|^\ell_{\psi_\ell}}}\sum_{n\geq0}\frac{1}{n!} \left(\frac{|X^s_i|^\ell}{10\|X_i\|^\ell_{\psi_\ell}}\right)^n\right]\nn\\
&\leq&1+\mu^2\|X_i\|^2_{\psi_\ell}E\left[e^{\frac{|X_i|^\ell}{5\|X_i\|^\ell_{\psi_\ell}}}\right]\nn\\
&\leq&\exp \left(\mu^2\|X_i\|^2_{\psi_\ell}E\left[e^{\frac{|X_i|^\ell}{5\|X_i\|^\ell_{\psi_\ell}}}\right]\right)\leq \exp (2\mu^2\|X_i\|^2_{\psi_\ell})\,. \label{eq:expbound/new}
\eea
It follows that
\be\label{eq:cnor2}
P\left(\left|\sum_{i\in[N]}X^s_i\right|\geq t\right)\leq 2e^{-\mu t+2N\mu^2\|X_1\|^2_{\psi_\ell}}\leq
\begin{cases}
2\exp\left(-\frac{t^2}{8\|X_1\|^2_{\psi_\ell}N}\right)&0<t< 4N\bar\mu\|X_1\|^2_{\psi_\ell}\,;\\
2\exp\left(-\bar\mu t+2N\bar\mu^2\|X_1\|^2_{\psi_\ell}\right)&t\geq 4N\bar\mu\|X_1\|^2_{\psi_\ell}\,. 
\end{cases}
\ee
With our choice of parameters the above formula rewrites as
\be\label{eq:cnor2-II}
P\left(\left|\sum_{i\in[N]}X^s_i\right|\geq t\right)\leq
\begin{cases}
2\exp\left(-\frac{t^2}{8\|X_1\|^2_{\psi_\ell}N}\right)&0<t< s\,;\\
2\exp\left(-\frac{t^\ell}{8\|X_1\|^\ell_{\psi_\ell}}\right)&t\geq s\,. 
\end{cases}
\ee
Combining (\ref{eq:dec-P}) and (\ref{eq:cnor2-II}) gives the assertion. 
\end{proof}


\section{Two technical lemmas}\label{Sect:AppTH2}

The following two results are basically calculus. 

\begin{lemma}\label{lemma:tecnico}
Let $g(x,p):=1-(1-2x)^p-x^{\frac p2}$ and $f(x,p):=1-x^p-px^{p-1}+px$. It is $g>0$ for all $p\geq2$ and $x\in[0,1/2]$. Moreover for any $a\in(0,1)$ it is $f(p,x)\geq f(p,a)>0$ for all $p\in(1,2]$ and $x\in[0,a]$. 
\end{lemma}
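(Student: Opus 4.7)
Both claims are elementary calculus; I would treat them separately, and neither is really difficult.

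\emph{Positivity of $g$.} The strategy is to bound the two subtracted pieces in the right direction. Since $1-2x\in[0,1]$ and $p\geq 1$, the elementary inequality $y^p\leq y$ for $y\in[0,1]$ applied to $y=1-2x$ gives $(1-2x)^p\leq 1-2x$, hence
\[
1-(1-2x)^p \geq 2x.
\]
On the other hand, writing $x^{p/2}=x\cdot x^{p/2-1}$ and using $p/2-1\geq 0$ together with $x\leq 1/2\leq 1$, one has $x^{p/2-1}\leq 1$, so $x^{p/2}\leq x$. Combining,
\[
g(x,p)\geq 2x-x = x > 0 \qquad\text{for } x\in(0,1/2].
\]
Note $g(0,p)=0$, so the statement should be read on $(0,1/2]$.

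\emph{Monotonicity and positivity of $f$.} The plan is to prove that $f(\,\cdot\,,p)$ is strictly decreasing on $[0,1]$. Since $f(0,p)=1$ and $f(1,p)=1-1-p+p=0$, strict monotonicity immediately gives both $f(x,p)\geq f(a,p)$ on $[0,a]$ and $f(a,p)>0$ for any $a\in(0,1)$. The case $p=2$ is trivial: $f(x,2)=1-x^2$. For $p\in(1,2)$, differentiate:
\[
\partial_x f(x,p)=p\bigl(1-x^{p-1}-(p-1)x^{p-2}\bigr)=p\bigl(1-h(x)\bigr), \qquad h(x):=x^{p-1}+(p-1)x^{p-2},
\]
so it suffices to show $h>1$ on $(0,1)$. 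A direct computation gives
\[
h'(x)=(p-1)x^{p-3}\bigl(x-(2-p)\bigr),
\]
so the unique critical point in $(0,1)$ is $x_\star=2-p$, and it is a minimum with value
\[
h(x_\star)=(2-p)^{p-2}\bigl[(2-p)+(p-1)\bigr]=(2-p)^{p-2}.
\]
The base $2-p$ lies in $(0,1)$ and the exponent $p-2$ is negative, so $(2-p)^{p-2}>1$. Thus $h>1$ on $(0,1)$ and $\partial_x f<0$ there, as required.

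\emph{Main obstacle.} There is no real obstacle; the only step needing a moment of thought is the algebraic identification $h(2-p)=(2-p)^{p-2}$ (by factoring $(2-p)^{p-2}$ out of the two terms) and then the observation that a base in $(0,1)$ raised to a negative exponent exceeds $1$.
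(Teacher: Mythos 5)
Your proof is correct and takes a genuinely different route than the paper's for both parts. For the positivity of $g$, the paper observes that $(1-2x)^p+x^{p/2}$ is non-increasing in $p$ (both bases lie in $[0,1]$), reduces to $p=2$, and evaluates $g(x,2)=x(3-4x)>0$ explicitly; your direct bounds $(1-2x)^p\leq 1-2x$ and $x^{p/2}\leq x$ give the same conclusion in one line and avoid the parametric reduction. For $f$, the paper's argument has two stages: it first proves $f$ is non-decreasing in $p$ (via an auxiliary inequality $1+(x+p)\log x\leq x$) to obtain $f>0$ for interior $x$, and then shows $f$ is non-increasing in $x$; but the justification of the key inequality $\tfrac{p-1}{x}+1\geq x^{1-p}$ in that second stage (``clearly true near the origin and at $x=1$ \dots since the functions on both sides are decreasing'') is only sketched, as monotonicity of both sides plus agreement at the endpoints does not by itself force the inequality in the interior. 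Your computation of $h(x)=x^{p-1}+(p-1)x^{p-2}$, locating its interior minimum $h(2-p)=(2-p)^{p-2}>1$, proves exactly that inequality rigorously, and then the whole claim follows at once from strict decrease of $f(\cdot,p)$ on $(0,1)$ together with $f(1,p)=0$; this is both more economical (no need for the monotonicity-in-$p$ step) and tighter than the paper's sketch.
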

\begin{proof}
For $x\in[0,1/2]$ the function $(1-2x)^p+x^{\frac p2}$ is decreasing in $p$, so it suffices to study $g(x,2)$ for which one verifies explicitly $g(x,2)> 0$ for all $x\in[0,1/2]$.

Now we pass to $f$. First we note that
\be\label{eq:firstT}
1+(x+p)\log x\leq x\,,\qquad\forall\, x\in[0,1]\,.
\ee
The proof is simple: we compare the function $\log x$ with $\frac{x-1}{x+p}$ for $x\in[0,1]$ and since 
$$
\frac 1x=\frac {d}{dx}\log x\geq \frac {d}{dx} \frac{x-1}{x+p}=\frac{p+1}{(x+p)^2}\qquad\forall\, x\in[0,1]
$$
and in $x=1$ the two functions intersect, (\ref{eq:firstT}) follows. 

Next we note that $f(x,1)=0$ and $f(x,2)\geq 0$ for all $x\in[0,1]$. Then we show that $f$ is non-decreasing in $p$ uniformly in $x\in[0,1]$. We compute
$$
\frac {\partial}{\partial p} f(x,p)=x\left(1-x^{p-2}\left(1+(x+p)\log x\right)\right)\geq x\left(1-x^{p-1}\right)\geq0\,
$$
thanks to (\ref{eq:firstT}). This tells us $f\geq0$. Moreover we compute
$$
\frac {\partial}{\partial x} f(x,p)= p\left(1-x^{p-1}\left(\frac{p-1}{x}+1\right)\right)\,.
$$
We have for all $x\in[0,1]$
$$
\frac{p-1}{x}+1\geq\frac{1}{x^{p-1}}\,.
$$
The above inequality is clearly true if $x$ is near the origin and at $x=1$. Indeed it must hold in the whole interval $[0,1]$, since the functions on both sides are decreasing.

It follows that $f$ is decreasing in $[0,1]$ uniformly in $p\in(1,2]$, whence the assertion.
\end{proof}

\begin{lemma}\label{lemma:verify}
Let $r\in[0,\frac12]$, $c_1>0$. Let also $\bar r=\bar r(c_1)\in[0,1/2]$ defined implicitly by
$$
\frac{S(\bar r)}{1-2\bar r}=c_1
$$
and set $c_2:=\max(\frac{1}{c_1}, \frac{(1-2\bar r)^2}{2c_1\bar r})$. For all $\a\geq c_2S(r)/(1-2r)^2$ it holds
\be\label{eq:verify}
S(r)\leq c_1(1+\a)(1-2r)\min\left(1,\frac{(1+\a)(1-2r)}{\a}\right)\,. 
\ee
\end{lemma}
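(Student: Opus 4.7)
My plan is to unpack the minimum on the right-hand side of \eqref{eq:verify} as
$$
c_1(1+\a)(1-2r)\min\bigl(1,(1+\a)(1-2r)/\a\bigr) \;=\; \min\bigl(c_1(1+\a)(1-2r),\, c_1(1+\a)^2(1-2r)^2/\a\bigr),
$$
so that proving \eqref{eq:verify} is equivalent to proving the two bounds $S(r)\leq c_1(1+\a)(1-2r)$ and $S(r)\leq c_1(1+\a)^2(1-2r)^2/\a$ separately.

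Next I would observe that both of these quantities are controlled from below by the same, weaker quantity $c_1\a(1-2r)^2$: indeed $c_1(1+\a)(1-2r)\geq c_1\a(1-2r)\geq c_1\a(1-2r)^2$ since $1-2r\in[0,1]$, and $c_1(1+\a)^2(1-2r)^2/\a\geq c_1\a(1-2r)^2$ since $(1+\a)^2\geq\a^2$. Thus the whole lemma reduces to verifying the single inequality $S(r)\leq c_1\a(1-2r)^2$, which is immediate from the hypothesis $\a\geq c_2 S(r)/(1-2r)^2$ as soon as $c_1 c_2\geq 1$; this is guaranteed by the first entry $1/c_1$ of the max defining $c_2$.

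The only ``obstacle'' I anticipate is recognising that the appearance of $\bar r$ and of the second term $(1-2\bar r)^2/(2c_1\bar r)$ in the definition of $c_2$ is not actually needed for this streamlined argument. These extra quantities presumably encode a sharper or alternative split of the proof — for instance, partitioning on whether $r\leq\bar r$ and exploiting the monotonicity of $r\mapsto S(r)/(1-2r)$ to obtain $S(r)\leq c_1(1-2r)$ directly in the low-$r$ regime — but the lemma as stated is valid with any $c_2\geq 1/c_1$. Overall the proof is essentially a short calculation with no substantial difficulty; no concentration, combinatorial or monotonicity input beyond the trivial bounds $1+\a\geq\a$ and $(1+\a)^2\geq\a^2$ is needed.
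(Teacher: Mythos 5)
Your proof is correct, and it takes a genuinely different and leaner route than the paper's. The paper parses the $\min$ by determining which branch is active (equivalently, splitting on whether $\a\leq(1-2r)/(2r)$), and then further partitions $r$ into $[0,\bar r]$ and $[\bar r,1/2]$, invoking the monotonicity of $r\mapsto S(r)/(1-2r)$ and the defining equation of $\bar r$ to verify the resulting pair of conditions region by region. You bypass all of that by noting that \emph{both} branches of the minimum are bounded below by the common quantity $c_1\a(1-2r)^2$ (using only $1+\a\geq\a$, $1-2r\geq(1-2r)^2$ and $(1+\a)^2\geq\a^2$), after which the hypothesis $\a\geq c_2 S(r)/(1-2r)^2$ with $c_1c_2\geq1$ closes the argument in one line. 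You are also right that this reveals the $\bar r$-dependent entry in the definition of $c_2$ to be superfluous for the stated conclusion: your argument in fact establishes the lemma with the smaller constant $c_2=1/c_1$, which is a (marginally) stronger result. Both proofs are elementary calculus with no hidden input; yours is simply the shorter and more transparent of the two.
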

\begin{proof}
\eqref{eq:verify} selects two conditions, namely either
\be\label{eq:or}
\a\leq \frac{1-2r}{2r}\,,\quad c_1\a\geq\frac{S(r)}{1-2r}-c_1\,\qquad\mbox{ or }\qquad
\a>\frac{1-2r}{2r}\,,\quad c_1\frac{(1+\a)^2}{\a}\geq \frac{S(r)}{(1-2r)^2}\,.
\ee
For $r\in[0,1/2]$ the function $S(r)$ increases and $c_1(1-2r)$ decreases. Let us denote $\bar r$ their unique intersection point in $[0,1/2]$. Clearly $\bar r$ depends on $c_1$ and $\bar r\to0$ as $c_1\to0$. If $r\in[0,\bar r]$ then for every $\a<(1-2r)/2r$ it holds
$$
c_1\a\geq\frac{S(r)}{1-2r}-c_1\,.
$$
Moreover there is $C>0$ such that
$$
\frac{1-2r}{2r}\geq C \frac{S(r)}{(1-2r)^2}\qquad \forall r\in[0,\bar r]\,. 
$$
Indeed by definition of $\bar r$ the above condition is implied by
$$
\frac{1-2r}{2r}\geq \frac{c_1C}{1-2r}\qquad \forall r\in[0,\bar r]\,,
$$
therefore it suffices to take $c_2:=\frac{(1-2\bar r)^2}{2c_1\bar r}$ and we have the statement for $r\in[0,\bar r]$.

For $r\in[\bar r, 1/2]$ we use the second condition in \eqref{eq:or}. First we observe that, since $(1+\a)^2/\a>\a$, the condition $c_1\frac{(1+\a)^2}{\a}\geq \frac{S(r)}{(1-2r)^2}$ is implied by $\a\geq CS(r)/(1-2r)^2$ for all $C>c^{-1}_1$. It remains to show that there is $C>c^{-1}_1$ such that
\be\label{eq:above}
\frac{1-2r}{2r}\leq C\frac{S(r)}{(1-2r)^2}\quad\mbox{or equivalently}\quad \frac{(1-2r)^2}{2r}\leq C\frac{S(r)}{(1-2r)}\,.
\ee
By definition of $\bar r$ 
$$
\frac{S(r)}{(1-2r)}\geq c_1\qquad \forall r\in[\bar r, 1/2]\,.
$$
The l.h.s. of the second inequality in \eqref{eq:above} is decreasing and its r.h.s. is increasing, whence it suffices to require
$$
\frac{(1-2\bar r)^2}{2\bar r}\leq C c_1\,.
$$
Thus taking $c_2:=\max(\frac{1}{c_1}, \frac{(1-2\bar r)^2}{2c_1\bar r})$ we have proved the statement also for $r\in[\bar r, 1/2]$. 
\end{proof}


\bibliographystyle{amsplain}
\bibliography{RBMT0}

\end{document}